\newtheorem*{thm1}{Theorem (S. Siksek)}
\newtheorem{thm}{Theorem}
\newtheorem*{lem3.1}{Lemma 3.1}
\newtheorem*{lem3.2}{Lemma 3.2}
\newtheorem*{lem3.3}{Lemma 3.3}
\newtheorem*{lem3.4}{Lemma 3.4}
\newtheorem*{lem3.4SS}{Lemma (S. Siksek)}
\newtheorem{prop}{Proposition}
\newtheorem{mylem}{Lemma}
\DeclareMathOperator{\Span}{Span}
\newcommand{\PP}{\mathbb{P}}
\newcommand{\F}{\mathbb{F}}
\newcommand{\xx}{\mathbf{x}}
\newcommand{\ch}{\mathrm{char}}
\title{Generators for Cubic Surfaces with two Skew Lines over Finite Fields}
\author{Jenny Cooley}
\address{Mathematics Institute\\University of Warwick\\ Coventry, CV4 7AL\\UK}
\email{j.a.cooley@warwick.ac.uk}
\date{\today}
\thanks{The author is supported by an Engineering and Physical Sciences Research Council (EPSRC) studentship}
\keywords{cubic surfaces, finite fields, secant and tangent, generating set}
\begin{document}

\begin{abstract}
Let $S$ be a smooth cubic surface defined over a field $K$.
As observed by Segre \cite{Segre} and Manin \cite{Ma1,Ma2}, 
there is a secant and tangent
process on $S$ that generates new $K$-rational points from old. It is
natural to ask for the size of a minimal generating set for $S(K)$.
In a recent paper, for fields $K$ with at least $13$ elements,
Siksek \cite{Siksek} showed that if $S$ contains
a skew pair of $K$-lines then $S(K)$ can be generated from
one point. In this paper we prove the corresponding version
of this result for fields $K$ having at least $4$ elements,
and slightly milder results for $\# K=2$ or $3$.
\end{abstract}

\maketitle

\section{Introduction}
Let $E$ be an elliptic curve over a field $K$. It is well-known
that, if $K$ is finite, then
 the set of $K$-rational points $E(K)$ is a finite abelian
group that is either cyclic or isomorphic to a product of two
cyclic subgroups. The group structure on $E$ is given by
the familiar secant and tangent process.

Let $S$ be a smooth cubic surface over a field $K$. There is
still a secant and tangent process that generates new points
from old points. 
This process was introduced by Segre \cite{Segre}
and studied by several authors, most notably Manin \cite{Ma1,Ma2}.
This process does not give the set of $K$-rational

points $S(K)$ a group structure. However, it is reasonable to
ask, for $K$ a field,
 whether it is still possible to generate all the $K$-rational
points from just one or two points. In a recent paper, Siksek \cite{Siksek}
shows the following. Let $S$ be a 
smooth cubic surface defined over a field $K$ having
at least $13$ elements. Suppose $S$ contains a skew pair of lines $\ell_1$,
$\ell_2$ defined over $K$. Then $S(K)$ can be generated by
just one point. The purpose of this paper is to extend the proof
of this to fields with at least $4$ elements, and prove similar
(but slightly weaker) statements over fields with $2$ or $3$ elements.

In the remainder of this introduction, we will give a precise
definition of the secant and tangent process, and state our results.
Let $K$ be field and let
$S$ be a smooth cubic surface defined over $K$. It is a well-known
classical theorem, due to Cayley and Salmon, that $S$ contains
$27$ lines defined over $\overline{K}$. Let $\ell$ be a line
not contained in $S$. Then $\ell \cdot S=P+Q+R$ where $P$, $Q$, $R$
are points on $S$, counted according to multiplicity. If $P$, $Q \in S(K)$
and $\ell$ is a $K$-line (that is, it is defined over $K$), then
$R \in S(K)$. Of course, if $P \ne Q$, then the line $\ell$ is
the secant line joining $P$, $Q$, and if $P=Q$ then $\ell$ is
a tangent line to the surface at $P$. Let $B$ be a subset of $S(K)$.
We define a sequence of sets
\[
B=B_0 \subseteq B_1 \subseteq B_2 \subseteq \cdots \subseteq S(K)
\]
inductively as follows: a point $R \in S(K)$ belongs to $B_{n+1}$ if 
and only if, either $R \in B_n$, or there are points $P$, $Q \in B_n$
and a $K$-line $\ell$ not lying on $S$ such that $\ell \cdot S=P+Q+R$. 
We let $\Span(B)= \cup_{i=0}^\infty B_i$.
In other words, $\Span(B)\subseteq S(K)$  
is that set of points that 
we can obtain from $B$ via successive applications of the tangent and
secant process.  

An {\em Eckardt point} is a point on $S$ where three of the $27$ lines meet.
The main aim of this paper is to prove the following theorems.

\begin{thm}
\label{thm:4+}
Let $K$ be a field with at least $4$ elements. 
Let $S$ be a smooth cubic surface over $K$. 
Suppose $S$ contains a skew pair of lines both defined over $K$. 
Let $P$ be any $K$-rational point on either line 
that is not Eckardt.
Then $\Span(P) = S(K)$.
\end{thm}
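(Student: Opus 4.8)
The plan is to exploit the geometry of the skew pair $\ell_1$, $\ell_2$. Two skew lines span $\PP^3$, so each point $x\in S$ off $\ell_1\cup\ell_2$ lies on a unique transversal $K$-line $L_x$ meeting $\ell_1$ at a point $a$ and $\ell_2$ at a point $b$, and then $L_x\cdot S=x+a+b$ unless $L_x\subseteq S$; moreover only finitely many (at most five) lines of $S$ meet both $\ell_1$ and $\ell_2$. I will also use that for a non-Eckardt $K$-point $Q$ on $\ell_i$ the tangent plane section $T_QS\cap S$ is a plane cubic with a double point at $Q$, necessarily of the form $\ell_i\cup C_Q$ with $C_Q$ a conic through $Q$; being residual to $\ell_i$ it is defined over $K$, it has the $K$-point $Q$, and — crucially when $\#K$ is small — a conjugate pair of lines cannot occur for $C_Q$ unless $Q$ is Eckardt, so $C_Q(K)$ has at least $\#K+1\ge 5$ points. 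Finally, since $\ell_i\subseteq T_QS$ and $\ell_1,\ell_2$ are skew, the other line meets $T_QS$ in a single $K$-point, which lies on $C_Q$.

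First I would generate the residual conic at $P$. For $x\in C_P(K)$ the line $\overline{Px}$ lies in $T_PS$ and passes through the double point $P$ of $\ell_1\cup C_P$, so $\overline{Px}\cdot S=2P+x$ and $x\in\Span(P)$; hence $C_P(K)\subseteq\Span(P)$, and in particular $P':=\ell_2\cap T_PS$ is a point of $\ell_2(K)$ lying in $\Span(P)$. Next I would fill up $\ell_1(K)$: given $a\in\ell_1(K)$, project $C_P$ away from $a$ (legitimate since $a\notin C_P$), pick $x\in C_P(K)$ avoiding the finitely many bad values (essentially just $x=P$), let $x'$ be the residual point of $\overline{ax}\cap C_P$ (a $K$-point, as its fibre contains $x$), and observe $\overline{ax}\not\subseteq S$ with $\overline{ax}\cdot S=x+x'+a$, so $a\in\Span(P)$. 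Here $\#C_P(K)\ge 5$ guarantees an admissible $x$, and this is exactly the place where $\#K\ge 4$ enters; thus $\ell_1(K)\subseteq\Span(P)$.

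To cross to $\ell_2$ I would use, for $a\in\ell_1(K)$, the point $b_a:=\ell_2\cap T_aS\in\ell_2(K)$: the line $\overline{ab_a}$ lies in $T_aS$, meets $S$ doubly at $a$ and again at $b_a\neq a$, so $\overline{ab_a}\cdot S=2a+b_a$, and since $a\in\Span(P)$ this yields $b_a\in\Span(P)$ whenever $\overline{ab_a}\not\subseteq S$ — which fails only for the $\le 5$ values of $a$ lying on a line of $S$ that meets $\ell_2$. Choosing one such $b_a$, say $b_0$, that is not Eckardt, and rerunning the two previous paragraphs with $(\ell_2,b_0)$ in place of $(\ell_1,P)$, gives $\ell_2(K)\subseteq\Span(P)$. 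It then remains to reach an arbitrary $x\in S(K)$: if $x\in\ell_1(K)\cup\ell_2(K)$ we are done, and otherwise $L_x\cdot S=x+a+b$ with $a\in\ell_1(K)$, $b\in\ell_2(K)$ places $x\in\Span(P)$ unless $L_x\subseteq S$; for the remaining $x$ (lying on one of the $\le 5$ transversal lines of $S$) I would cut $S$ with a generic $K$-plane through $x$ meeting no line of $S$ through $x$, obtaining an irreducible plane cubic on which a suitable $K$-line through $x$ recovers $x$ from two points already placed in $\Span(P)$.

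The step I expect to be the genuine obstacle is the crossing from $\ell_1$ to $\ell_2$, and more broadly the task of making every counting argument survive when $\#K=4$, so that each $\ell_i(K)$ has only five points. The exceptional loci one must avoid — the $\le 5$ lines of $S$ meeting $\ell_1$ or $\ell_2$, the Eckardt points on the lines, the finitely many points where a residual conic degenerates, the points of the $\le 5$ transversal lines — are bounded by small constants that are not obviously negligible against a set of five rational points. Controlling them seems to require the rigidity of the $27$-line configuration: an Eckardt point on $\ell_2$, or a degenerate residual conic, forces extra incidences, and over $\F_4$ not all of these sub-configurations are $K$-rational; and when one does get stuck, one can fall back on another line $m\subseteq S$ skew to $\ell_2$ with $m(K)\subseteq\Span(P)$ already established (produced by the same tangent-plane trick applied to a degenerate $C_Q$) and rerun the crossing for the pair $(m,\ell_2)$. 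This small-field phenomenon is precisely what breaks down for $\#K=2,3$ and forces the weaker conclusions there.
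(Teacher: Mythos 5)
Your overall strategy --- sweep out $\ell_1(K)$ via the tangent-plane section at $P$, cross to $\ell_2$, then reach the rest of $S(K)$ by transversals --- is the same as the paper's, but two of your steps fail as stated, and they fail exactly at the points where the hypothesis $\#K\geq 4$ has to do real work. First, the claim $C_P(K)\subseteq\Span(P)$ breaks down at the residual intersection point $P'$ of $\ell_1$ with $C_P$: when $P'\neq P$ the line joining $P$ to $P'$ is $\ell_1$ itself, which lies on $S$, so no third point is produced and projection from $P$ never reaches $P'$; your subsequent filling-up of $\ell_1(K)$ by secants $\overline{ax}$ also never outputs $a=P'$, since $x'=P'$ would force $\overline{ax}=\ell_1$. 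Recovering $P'$ is the bulk of the paper's Lemma~\ref{lema:3.1,4+}: one picks a third non-Eckardt point $R\in\ell_1(K)\setminus\{P,P'\}$ (possible because $\gamma_{\ell_1}$ is separable here, so $\ell_1$ carries at most two Eckardt points while $\#\ell_1(K)\geq 5$), notes $\Pi_R\neq\Pi_P$, and extracts $P'$ from $\Gamma_R$, with a separate argument when $\Gamma_R$ is a union of three lines. Second, your assertion that the residual conic at a non-Eckardt point is irreducible (ruling out only a Galois-conjugate pair of lines) is false: a non-Eckardt $Q$ may lie on a second line $m\subset S$, in which case $\Gamma_Q=\ell\cup m\cup n$ with all three lines $K$-rational and $n$ not through $Q$; the projection-from-$Q$ argument then misses the vertices $\ell\cdot n$ and $m\cdot n$, and this configuration needs the separate treatment of Lemma~\ref{lema:3.2,4+}.

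The crossing from $\ell_1$ to $\ell_2$, which you correctly flag as the obstacle, is left unproved: you need some $a\in\ell_1(K)$ for which $b_a=\ell_2\cdot\Pi_a$ is non-Eckardt, and for $\#K=4,5,8$ the naive count (degree-$2$ Gauss map, at most two Eckardt points on $\ell_2$, only $5$, $6$ or $9$ points on $\ell_1$) does not close. The paper's resolution is Proposition~\ref{prop:conic} --- if $\Gamma_a$ is $\ell_1$ union an \emph{irreducible} conic then $\ell_2\cdot\Pi_a$ is automatically non-Eckardt --- combined with, when every point of $\ell_1(K)$ lies on a second line of $S$, the construction of a fresh skew pair out of those second lines; this is the fallback you gesture at but do not carry out. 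Finally, your endgame for a point $x$ on a transversal line of $S$ (cut with a generic plane through $x$) does not identify two points of $\Span(P)$ collinear with $x$ on the resulting cubic. The paper instead intersects the two planes spanned by $x$ with each $\ell_i$: the resulting line either cuts $S$ in $x$ plus one point of each $\ell_i(K)$, or lies on $S$ and is handled by Lemma~\ref{lema:3.2,4+} applied at $Q_1=\ell_1\cdot\Pi_2$, with an additional argument (Lemma~\ref{lema:3.4,4+}) when both $Q_1$ and $Q_2$ are Eckardt.
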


\begin{thm}
\label{thm:F3}
Let $K=\F_3$. 
Let $S$ be a smooth cubic surface over $K$. 
Suppose $S$ contains a skew pair of lines $\ell$ and $\ell^\prime$ defined over $K$ 
and that $\ell$ and $\ell^\prime$ each contain at most one $K$-rational Eckardt point.
Then there exists a point $P \in \ell(K)\cup\ell^\prime(K)$ such that
$\Span(P) = S(K)$.
\end{thm}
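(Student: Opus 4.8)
The plan is to re-run the proof of Theorem~\ref{thm:4+} over $\F_3$, isolating the few places where that argument used $\#K \ge 4$ and disposing of them with the extra hypothesis that $\ell$ and $\ell^\prime$ each carry at most one $K$-rational Eckardt point. Recall the two geometric mechanisms behind Theorem~\ref{thm:4+}. First, projection from the pencil of planes through $\ell^\prime$ (respectively $\ell$) exhibits $S$ as a conic bundle over $\ell$ (respectively over $\ell^\prime$): the fibre $C_P$ over $P \in \ell(K)$ is the residual conic of $\langle P, \ell^\prime \rangle \cap S = \ell^\prime \cup C_P$, it passes through $P$, meets $\ell$ again, meets $\ell^\prime$ twice, and is smooth exactly when $P$ is not one of the (at most five) feet on $\ell$ of the lines transversal to both $\ell$ and $\ell^\prime$. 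Since $S(K) = \ell^\prime(K) \cup \bigcup_{P \in \ell(K)} C_P(K)$, and since for a smooth such fibre the secants $\overline{PQ}$ with $Q$ running over $\ell^\prime(K)$ generate $C_P(K)$ from $\{P\} \cup \ell^\prime(K)$, it suffices to exhibit a seed $P$ with $\ell(K) \cup \ell^\prime(K) \subseteq \Span(P)$ and to handle separately the finitely many singular fibres. Second, if $P \in \ell(K)$ is not an Eckardt point then $T_P S \cap S = \ell \cup D_P$ for a conic $D_P$ through $P$, and running the $K$-lines of $T_P S$ through $P$---each tangent to $S$ at $P$ and, apart from $\ell$ and at most one further line of $S$ through $P$, not contained in $S$---generates much of $D_P(K)$ from $\{P\}$, in particular (when $P$ is not a transversal foot) the $K$-point $T_P S \cap \ell^\prime$ of $\ell^\prime$; and symmetrically from a point of $\ell^\prime$.

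Over $K = \F_3$ each of $\ell(K)$ and $\ell^\prime(K)$ has exactly four points, at least three of them non-Eckardt, and there are at most five transversal lines $m_1, \dots, m_r$, with feet $a_i \in \ell$ and $b_i \in \ell^\prime$. The scheme is: choose the seed $P$ to be a non-Eckardt point of $\ell(K) \cup \ell^\prime(K)$ that is not a transversal foot; from $P$ reach a point of the opposite line by the tangent-plane mechanism, then iterate, both along the resulting chain of line-points and from the further points of $D_P$ already produced, so as to move over both $\ell$ and $\ell^\prime$; finally close up using, for each $P^\prime \in \ell(K)$ reached with $C_{P^\prime}$ smooth, the secant construction giving $C_{P^\prime}(K)$. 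What has to be shown is that for a suitable seed this process reaches all of $\ell(K) \cup \ell^\prime(K)$; the reduction above, plus the treatment of the singular fibres $C_{a_i}$, then gives $\Span(P) = S(K)$.

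The work is a finite case analysis, forced on us because four points per line leaves almost no slack against up to five transversal feet and one tolerated Eckardt point. I would organise the cases by the Galois orbit type of the five transversals---i.e.\ the partition of $5$, which governs how many feet are $K$-rational and whether two transversals share a foot (which can happen only at an Eckardt point)---together with the location of the at most one $K$-rational Eckardt point on each of $\ell$ and $\ell^\prime$. In the roomy configurations a seed is easy to find and the iteration plainly exhausts both lines. The tight cases are those where the transversal feet and the Eckardt point together occupy three or four of the $K$-points of a line; there I expect to argue by hand, using for a transversal foot $a_i$ the third line $n_i$ of the tritangent plane $\langle \ell, m_i \rangle$ (the tangent-plane mechanism at $a_i$ produces all of $n_i(K)$, which re-enters $\ell$, $\ell^\prime$, or a smooth fibre) and the second component $m_i^\prime$ of the singular fibre $C_{a_i} = m_i \cup m_i^\prime$, and in a handful of the very tightest cases perhaps a direct check over $\F_3$. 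The hypothesis that $\ell$ and $\ell^\prime$ each contain at most one $K$-rational Eckardt point is precisely what rules out the configurations in which the iteration would be confined to two of the four points of a line with no way to the other two, and it also guarantees that a non-Eckardt seed exists at all; without it I do not see how to proceed, and the theorem is indeed stated only under this restriction.

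The principal obstacle is thus combinatorial rather than geometric: every geometric ingredient is inherited from the proof of Theorem~\ref{thm:4+} and the lemmas preceding it, and what remains is to verify that the Eckardt restriction eliminates exactly the bad orbit-and-Eckardt patterns, so that some seed always works over $\F_3$. A secondary point needing care is that the conics $C_P$ and $D_P$ pressed into service are genuine and suitably transverse to the relevant lines; over a three-element field this cannot be arranged by taking a general point and must instead be extracted from the same case analysis.
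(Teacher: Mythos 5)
Your plan assembles the right geometric toolkit --- the conic bundle over $\ell$ obtained from the pencil of planes through $\ell^\prime$, the tangent-plane curves $\Gamma_P = \ell \cup D_P$, and the reduction of $\Span(P) = S(K)$ to reaching $\ell(K)\cup\ell^\prime(K)$ --- and these are indeed the same ingredients the paper uses. But the proposal has a genuine gap: the decisive step, namely the finite case analysis showing that some seed always exhausts both lines, is announced rather than carried out. Everything after ``The work is a finite case analysis'' is a description of how you would organise cases (by the Galois orbit type of the five transversals and the placement of the Eckardt points), followed by ``I expect to argue by hand'' and ``perhaps a direct check over $\F_3$.'' That unexecuted analysis is the entire content of the theorem; the geometric mechanisms you list are already available from the proof of Theorem~\ref{thm:4+} and the surrounding lemmas, and the only question is whether they suffice over a four-point line. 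In particular, the step ``from $P$ reach a point of the opposite line by the tangent-plane mechanism, then iterate'' hides the hardest part: when $\Gamma_P = \ell \cup C$ with $C\cdot\ell = P + P^\prime$ and $P \ne P^\prime$, the point $P^\prime$ is not directly obtainable and must be recovered via auxiliary points of $\ell(K)$, of which there are only four; and to iterate from $Q = \Pi_P\cdot\ell^\prime$ one must certify that $Q$ is not Eckardt, which is exactly where Proposition~\ref{prop:conic} and the extra Eckardt hypothesis enter.

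It is worth knowing that the paper does \emph{not} manage this by hand either. It splits the theorem in two: when neither line carries a $K$-rational Eckardt point, a chain of lemmas (the $\F_3$ analogues of Lemmas~\ref{lema:3.1,4+}--\ref{lema:3.4,4+}, which lean heavily on the absence of Eckardt points to recover the missing point $P^\prime$ from the other two points $R$, $R^\prime$ of $\ell(K)$) gives a theoretical proof; when $\ell$ carries exactly one $K$-rational Eckardt point, the paper resorts to an exhaustive {\tt MAGMA} enumeration over $3^7 = 2187$ normal forms. Moreover, that enumeration turned up surfaces where no point of $\ell$ generates $S(K)$ once $\ell$ has two $K$-rational Eckardt points --- so the ``tight'' configurations you defer are not merely tedious, they sit right at the boundary of failure, and the claim that the Eckardt restriction ``eliminates exactly the bad orbit-and-Eckardt patterns'' cannot be taken on faith. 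To complete your proof you would either have to write out the full case analysis (including the one-Eckardt-point case, which the paper's authors evidently did not find a clean hand argument for) or concede the computer verification for that case, as the paper does.
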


\begin{thm}
\label{thm:F2}
Let $K=\F_2$. 
Let $S$ be a smooth cubic surface over $K$. 
Suppose $S$ contains a line $\ell$ defined over $K$
%and there are no $K$-rational Eckardt points on $\ell$.
that does not contain any $K$-rational Eckardt points.
Then there exists a point $P \in \ell(K)$ such that
$\Span(P) = S(K)$.
\end{thm}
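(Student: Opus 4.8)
The plan is to run the argument entirely off the single $K$-line $\ell$, using the pencil of planes through $\ell$ and its associated conic bundle. Let $\Pi_1,\Pi_2,\Pi_3$ be the three $K$-rational planes of this pencil; every $K$-point of $S$ lies on $\ell$ or on exactly one $\Pi_i$, so, writing $C_i$ for the conic residual to $\ell$ in $\Pi_i\cap S$, we have $S(K)=\ell(K)\cup C_1(K)\cup C_2(K)\cup C_3(K)$, and by the Lefschetz trace formula $S(K)$ is small. For $P\in\ell(K)$ the tangent plane $T_PS$ is $K$-rational, contains $\ell$, and cuts $S$ in $\ell+C_P$ with $C_P$ a conic through $P$; since $P$ is not Eckardt, $C_P$ is non-singular at $P$ — it is either smooth and tangent to $\ell$ at $P$, or the line pair of a tritangent plane of which $P$ is a vertex lying on $\ell$ — whereas at an Eckardt point every tangent line $m$ satisfies $m\cdot S=3P$, so such a point generates nothing. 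The basic local step is: the tangent process at $P$ along the $K$-lines of $T_PS$ through $P$ that are not contained in $S$ produces, for each such line $m$, the residual point $Q$ of $m\cdot S=2P+Q$; and since projection from $P$ identifies $C_P$ with the pencil of lines through $P$ in $T_PS$, these points exhaust $C_P(K)$, so $\Span(P)\supseteq C_P(K)$. Moreover the secant joining any two points of $C_P(K)$ not both lying on $\ell$ is contained in $T_PS$ but not in $S$ and meets $\ell$ in a further $K$-point, so $\Span(P)$ also meets $\ell(K)\setminus\{P\}$.

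From this I would deduce $\Span(P)=S(K)$ for a suitably chosen $P\in\ell(K)$ in three stages: (i) $\Span(P)\supseteq\ell(K)$; (ii) $\Span(P)\supseteq C_i(K)$ for each $i$; (iii) conclude, using $S(K)=\ell(K)\cup\bigcup_i C_i(K)$. Stage (i) is where the hypothesis that $\ell$ carries no $K$-rational Eckardt point is used essentially: it makes every point of $\ell(K)$ a live starting point with a well-behaved residual conic, and the secant construction of the local step links the at most three points of $\ell(K)$ to one another in such a way that, starting from the right one of them, all three are reached. Stages (ii) and (iii) are the $\#K=2$ analogues of the statements underlying Theorem~\ref{thm:4+}, run fibre by fibre in the conic bundle, with (iii) immediate once (i) and (ii) hold.

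The main obstacle — and the reason for the weaker hypothesis relative to Theorem~\ref{thm:4+} — is that over $\F_2$ the generic versions of these moves repeatedly fail: a smooth conic fibre may have no $K$-point off $\ell$, there can be only one usable $K$-line through a point in a given plane, the secant produced by the local step may return a point already in hand, and in characteristic $2$ various of the tangency assertions above can degenerate through inseparability. I would therefore split the proof according to the Galois action on the $27$ lines. When $S$ contains a skew pair of $K$-lines one has the birational model $S\to\PP^1\times\PP^1$ and can recycle the machinery of Theorem~\ref{thm:4+}, auditing the finitely many steps that assumed $\#K\ge4$. The only cases not covered this way are those in which Frobenius fixes $\ell$ but almost none of the remaining lines; examining the stabiliser $W(D_5)\subset W(E_6)$ of a line shows that these amount to just a few conjugacy classes of Frobenius (of order $12$ or $4$), hence to finitely many surfaces over $\F_2$ up to isomorphism, and for each of these the required point $P\in\ell(K)$ is exhibited directly — a check short enough to do by hand and naturally left to a computer.
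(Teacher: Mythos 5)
Your overall architecture (fibre the surface by the three $K$-rational planes through $\ell$, then generate $\ell(K)$ and each residual conic $C_i(K)$ from one well-chosen $P$) is reasonable, but the proof has a genuine gap at exactly the point where all the difficulty lies: stages (i) and (ii) are asserted to be ``the $\#K=2$ analogues of the statements underlying Theorem~\ref{thm:4+}'', yet those statements are proved only for $\#K\ge 4$, and their proofs use that bound essentially. For instance, to show $\Gamma_P(K)\subseteq\Span(P)$ when $P$ is non-parabolic one must reach the second intersection point $P'$ of the residual conic with $\ell$; the paper's argument does this by finding a non-Eckardt point $R\in\ell(K)$ distinct from $P$ and $P'$ with $\Pi_R\ne\Pi_P$, which needs at least five $K$-points on $\ell$. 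Over $\F_2$ the line has three rational points, $\gamma_\ell$ may be inseparable, and a residual conic may have no $K$-point off $\ell$ at all --- you list these failure modes yourself but never overcome them. The concluding reduction does not close the gap either: ``auditing the finitely many steps that assumed $\#K\ge 4$'' is precisely the missing content, and the observation that the remaining Frobenius classes give ``finitely many surfaces over $\F_2$ up to isomorphism'' is vacuous (there are finitely many cubic surfaces over $\F_2$ altogether), while the claimed direct check is neither carried out nor specified precisely enough to carry out.

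For comparison, the paper does not attempt any of this theory over $\F_2$: it normalises $\ell$ to $X=Y=0$, writes $S=XQ_1+YQ_2$ with $Q_1\in\F_2[X,Y,Z,W]$ and $Q_2\in\F_2[Y,Z,W]$, and exhaustively enumerates all $2^{16}$ models by computer, verifying the conclusion whenever $S$ is smooth and $\ell$ carries no $K$-rational Eckardt point. So your instinct that the endgame is a finite computation agrees with the paper, but as written your proposal neither supplies the theoretical lemmas it invokes nor performs the computation it defers to.
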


The proofs of Theorems~\ref{thm:F3} and~\ref{thm:F2} use
a certain amount of exhaustive computer enumeration. 
It is perhaps appropriate to add a few words as to
why this is not convenient for Theorem~\ref{thm:4+},
especially as the results of Siksek allow us to
reduce to fields having at most $11$ elements.
To prove Theorem~\ref{thm:4+} by exhaustive enumeration
over a finite field $K$ we need
to enumerate up to projective equivalence
quadruples $(S,\ell,\ell^\prime,P)$ where $S$
is a smooth cubic surface over $K$, $\ell$, $\ell^\prime$
are a skew pair of $K$-lines lying on $S$, and $P \in \ell(K)$
is a non-Eckardt point. Moreover,
for each of these quadruples we would want to apply the tangent
and secant process repeatedly to prove that $\Span(P)=S(K)$.
We did invest some effort into understanding the invariant
theory needed for the enumeration, but it seems to us
that the theory needed to make the enumeration practical
for, say, $K=\F_{11}$, would be far more complicated than our
theoretical proof of Theorem~\ref{thm:4+}.

\section{Preliminary Results}
Here we quote some preliminary results on the geometry and arithmetic of cubic
surfaces.

\begin{thm} (Cayley-Salmon) \label{thm:27}
Every non-singular cubic surface over an algebraically closed field contains exactly $27$ lines.

Every line $\ell$ on the surface meets exactly $10$ other lines,
which break up into $5$ pairs $\ell_i$, $\ell^\prime_i$ ($i=1,\dots,5$) such that
$\ell$, $\ell_i$ and $\ell_i^\prime$ are coplanar,
and $(\ell_i \cup \ell_i^\prime) \cap (\ell_j \cup \ell_j^\prime)=\emptyset$ for $i \ne j$.
\end{thm}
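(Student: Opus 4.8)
The plan is to realise $S$ as a blow-up of the plane and reduce the whole statement to linear algebra in $\operatorname{Pic} S$. First I would recall the classical facts that over $\overline{K}$ a smooth cubic surface $S\subseteq\PP^3$ is anticanonically embedded (adjunction gives $\omega_S\cong\mathcal{O}_S(-1)$ for a cubic in $\PP^3$, so $-K_S$ is the hyperplane class) and is in fact isomorphic to the blow-up $\pi\colon S\to\PP^2$ of six points $p_1,\dots,p_6$ of $\PP^2$ in general position (no three collinear, not all six on a conic); equivalently, $S$ is a del Pezzo surface of degree $3$. Then $\operatorname{Pic} S=\mathbb{Z}h\oplus\mathbb{Z}e_1\oplus\cdots\oplus\mathbb{Z}e_6$ with $h^2=1$, $e_i^2=-1$ and all other intersection products zero, where $h$ is the pullback of a line and the $e_i$ are the exceptional curves; moreover $-K_S=3h-e_1-\cdots-e_6$.

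Next I would characterise the lines intrinsically. A line $\ell\subseteq S$ is a smooth rational curve with $\ell\cdot(-K_S)=\deg\ell=1$, so adjunction forces $\ell^2=-1$. Conversely, a nonzero effective class $C$ with $C\cdot(-K_S)=1$ must be irreducible, since $-K_S$ is ample and a nontrivial splitting $C=A+B$ with $A,B$ effective would give $1=(-K_S\cdot A)+(-K_S\cdot B)\ge 2$; once $C$ is irreducible, $C^2=-1$ and adjunction give $p_a(C)=0$, so $C$ is a line. Also distinct lines have distinct classes, since two distinct irreducible curves on a smooth surface meet nonnegatively whereas a line meets itself in $-1$. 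Thus the lines on $S$ correspond bijectively to the effective classes $C$ with $C^2=-1$ and $C\cdot(-K_S)=1$.

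Writing $C=ah-\sum b_ie_i$, the two conditions read $a^2-\sum b_i^2=-1$ and $3a-\sum b_i=1$. The Cauchy--Schwarz inequality gives $(3a-1)^2=\bigl(\sum b_i\bigr)^2\le 6\sum b_i^2=6(a^2+1)$, that is $3a^2-6a-5\le 0$, whence $a\in\{0,1,2\}$; a short case check over $\mathbb{Z}^6$ then shows the solutions are exactly the $6$ classes $e_i$, the $15$ classes $h-e_i-e_j$, and the $6$ classes $2h-\sum_{k\ne i}e_k$, each represented by an irreducible line in the blow-up model --- $27$ lines in all, which is the first assertion. For the second, it suffices by the transitivity of the symmetry group of the configuration --- the Weyl group $W(E_6)$, generated by permutations of the $e_i$ and the quadratic Cremona transformations, acts transitively on the $27$ lines --- to treat a single line, say $\ell=e_6$. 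Evaluating $C\cdot e_6$ on the $27$ classes shows the lines meeting $e_6$ are exactly $\ell_i:=h-e_i-e_6$ and $\ell_i':=2h-\sum_{k\ne i}e_k$ for $i=1,\dots,5$, so there are ten. For each $i$ one has $\ell_i+\ell_i'+e_6=-K_S$, the hyperplane class, so $\ell,\ell_i,\ell_i'$ are coplanar and $\ell_i\cdot\ell_i'=1$; and for $i\ne j$ one computes $\ell_i\cdot\ell_j=\ell_i\cdot\ell_j'=\ell_i'\cdot\ell_j'=0$, so $(\ell_i\cup\ell_i')\cap(\ell_j\cup\ell_j')=\emptyset$ --- precisely the claimed structure.

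The genuine content lies entirely in the first step: that an \emph{arbitrary} smooth cubic surface over $\overline{K}$ is the blow-up of $\PP^2$ at six general points with $-K_S$ as hyperplane class. Proving this (equivalently, that $S$ contains at least one line and that the relevant linear systems behave as expected) is the real work, after which everything reduces to bookkeeping in a rank-$7$ lattice. A more synthetic alternative would instead produce a line on $S$ by a dimension count on the incidence variety $\{(\ell,S):\ell\subseteq S\}$, then show that the pencil of planes through a fixed line $\ell$ cuts out on $S$ the divisors $\ell+Q$ with $Q$ a conic, degenerate into two lines for exactly five of the planes (a quintic discriminant condition), which would yield the ten lines through $\ell$ directly; the price is a more delicate verification that there are no further lines, and the ``exactly five'' there --- the analogue of the Cauchy--Schwarz bound above --- is the crux of that route.
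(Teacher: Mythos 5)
The paper offers no proof of its own here, only a citation to Hartshorne [V.4] and Shafarevich [IV.2], and your argument is essentially the standard one from those references: realise $S$ as the anticanonical blow-up of $\PP^2$ at six general points, identify lines with effective classes satisfying $C^2=-1$, $C\cdot(-K_S)=1$, and finish by lattice bookkeeping. Your computations (the Cauchy--Schwarz bound on $a$, the enumeration of the $27$ classes, and the intersection numbers giving the five coplanar pairs through a fixed line) all check out, and you correctly identify that the real content is the blow-up description itself, which is exactly what the cited reference supplies; so this matches the paper's (delegated) approach.
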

\begin{proof}
For a proof see \cite[V.4]{Hartshorne} or \cite[Section IV.2]{Shaf}.
\end{proof}

%\begin{mylem}
%\label{lema:triangle}
% If $\ell_1$, $\ell_2$, $\ell_3$ are three lines contained in a non-singular cubic surface
%and are coplanar, then any other line contained in the cubic surface intersects exactly one
%of $\ell_1$, $\ell_2$, $\ell_3$.
%\end{mylem}
%\begin{proof}
% This follows from the description of the $27$ lines in terms of a vector space, and a proof can be
%found in \cite[Chapter 1]{Reid}.
%\end{proof}

For now $S$ will denote a smooth cubic surface in $\PP^3$
over a %perfect 
field $K$, defined by a homogeneous cubic polynomial $F \in K[x_0,x_1,x_2,x_3]$.

The remainder of the section summarizes some results (mostly standard)
that can be found in Siksek's paper \cite[Section 2]{Siksek}.
For a point $P \in S(\overline{K})$,
 we shall denote the tangent plane to $S$ at $P$ by $\Pi_P$.
This is given by
$\Pi_P : \nabla{F}(P) \cdot \xx=0$.
We shall write $\Gamma_P$ for the plane curve $S \cdot \Pi_P$. It is
easy to check (using the smoothness of $S$)
that $\Gamma_P$ does not contain any multiple components.
It is a degree $3$ plane curve which is singular at $P$.
If $\Gamma_P$ is irreducible, it is nodal or cuspidal at $P$.
If $\Gamma_P$ is reducible then it is the union of a line and an irreducible conic,
or of three distinct lines.
The curve $\Gamma_P$ contains every $\overline{K}$-line on $S$ that
passes through $P$.

A $\overline{K}$-line $\ell$ is called an {\em asymptotic line} (c.f.\ \cite[Section 2]{Voloch})
at $P \in S(\overline{K})$ if $(\ell \cdot S)_P \geq 3$.
As $S$ is a cubic surface, it is seen that for an asymptotic line $\ell$ at $P$,
either $(\ell \cdot S)_P=3$ or $\ell \subset S$.
The asymptotic lines at $P$ are contained in $\Pi_P$.

Any line contained in $S$ and passing through $P$ is an
asymptotic line through $P$.
The number of distinct asymptotic $\overline{K}$-lines at $P$
is either $1$, $2$ or infinity.
If $S$ has either $1$ or
infinitely many asymptotic lines at $P$ then we shall call $P$ a {\em parabolic} point.
The case where there are infinitely many asymptotic lines  at $P$
is special: in this case $\Gamma_P$
decomposes as a union of three $\overline{K}$-lines passing through $P$ lying on $S$
and so the point
$P$ is an {\em Eckardt} point. If $P$ is parabolic
but not Eckardt, the curve $\Gamma_P$ has a cusp
at $P$. Note that for $P$ lying on a line $\ell\subset S$, if $P$ is not Eckardt then
 $\Gamma_P=\ell\cup C$ where $C$ is an irreducible conic, and $\ell$ lies tangent to $C$.
 If $P$ is non-parabolic, then $\Gamma_P$ has a node
at $P$.

We shall also need to study the number of parabolic points on a
line lying on a cubic surface. Let ${\PP^3}^*$ be the dual projective
space and write $\gamma : S \rightarrow {\PP^3}^*$ for the
{\em Gauss map} which sends a point to its tangent plane.
A useful characterisation of parabolic points is that they
are the points of ramification of the Gauss map \cite[Section 2]{Voloch}.
If $\ell \subset S$ and $P \in \ell$, then $\ell$ is contained
in the tangent plane $\Pi_P$. The family of planes through $\ell$ can be
identified with $\PP^1$ and once such an identification is fixed we let
$\gamma_\ell : \ell \rightarrow \PP^1$ be the map that sends a point
on $\ell$ to its tangent plane through $\ell$. The map $\gamma_\ell$ has
degree $2$ (\cite[proof of Lemma 2.2]{Siksek}), and hence is separable
if $\ch(K) \ne 2$.

\begin{mylem}\label{lem:paraline} (Siksek \cite[Lemma 2.2]{Siksek})
Let $\ell$ be a $K$-line contained in $S$. 
%Then
%every $P\in \ell(K)$ is either parabolic or $K$-hyperbolic.
\begin{enumerate}
\item[(i)] If $\ch(K) \neq 2$ then $\gamma_\ell$ is separable.
Precisely two points
$P \in \ell(\overline{K})$
are parabolic, and so there are at most two Eckardt points on $\ell$.
\item[(ii)] If $\ch(K)=2$ and $\gamma_\ell$ is separable then
there is precisely one point $P \in \ell(\overline{K})$
which is parabolic and so at most one Eckardt point on $\ell$.
\item[(iii)] If $\ch(K)=2$ and $\gamma_\ell$ is inseparable then
every point $P \in \ell(\overline{K})$ is parabolic
and the line $\ell$ contains exactly $5$ Eckardt points.
\end{enumerate}
\end{mylem}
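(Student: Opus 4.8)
The plan is to reduce all three parts to the ramification of the degree~$2$ morphism $\gamma_\ell\colon\ell\to\PP^1$, after first establishing that a point $P\in\ell(\overline{K})$ is parabolic if and only if $\gamma_\ell$ ramifies at $P$. To prove this equivalence, fix a plane $\Pi$ in the pencil through $\ell$; since $\gamma_\ell$ is a non-constant morphism of curves it is surjective, so $\Pi=\Pi_P$ for some $P\in\ell$, and $S\cap\Pi=\Gamma_P=\ell\cup C$ with $C$ the residual conic, which is reduced and shares no component with $\ell$ because $\Gamma_P$ has no multiple component. Every point of $\ell$ is a smooth point of $S$, and for such a point $Q$ one has $\Pi_Q=\Pi$ exactly when the cubic $S\cap\Pi$ is singular at $Q$ (the tangent plane $\Pi_Q$ being the unique plane through $Q$ meeting $S$ in a curve singular at $Q$); hence set-theoretically $\gamma_\ell^{-1}([\Pi])=\ell\cap\mathrm{Sing}(\ell\cup C)$. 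Using $\mathrm{Sing}(\ell\cup C)=(\ell\cap C)\cup\mathrm{Sing}(C)$ and $\deg\gamma_\ell=2$, one checks that this fibre consists of two distinct points exactly when $\ell$ meets $C$ in two distinct points, which is exactly when $\Gamma_P$ has a node at $P$, i.e.\ when $P$ is non-parabolic; otherwise the fibre is a single point and $\Gamma_P$ has a cusp (if $P$ is not Eckardt) or an ordinary triple point (if $P$ is Eckardt) at $P$, i.e.\ $P$ is parabolic. This agrees with the characterisation of parabolic points as the ramification of the Gauss map \cite{Voloch}, whose restriction to $\ell$ is $\gamma_\ell$.

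For (i) and (ii) I would apply the Riemann--Hurwitz formula to $\gamma_\ell\colon\PP^1\to\PP^1$: the ramification divisor $\mathfrak{R}$ has degree $-2-2\cdot(-2)=2$. In characteristic $\neq 2$ a degree~$2$ morphism is automatically separable (an inseparable one would have inseparable degree divisible by $p\ge 3$, hence not dividing~$2$), and every ramified point has ramification index~$2$, is tame, and contributes~$1$ to $\mathfrak{R}$; so there are exactly two ramified, hence two parabolic, points on $\ell$. If $\ch(K)=2$ and $\gamma_\ell$ is separable, a ramified point has ramification index $2=p$ and is wildly ramified, so its different exponent is at least $p=2$; as $\deg\mathfrak{R}=2$ there is then exactly one ramified, hence one parabolic, point on $\ell$. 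In either case the bound on Eckardt points follows because an Eckardt point on $\ell$ is in particular parabolic.

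For (iii): if $\gamma_\ell$ is inseparable then, being of degree $2=p$, it is purely inseparable, hence bijective on $\overline{K}$-points, so every fibre of $\gamma_\ell$ is a single point; by the first paragraph every point of $\ell(\overline{K})$ is then parabolic. To count Eckardt points I would invoke Theorem~\ref{thm:27}: the ten lines of $S$ meeting $\ell$ form five coplanar pairs $(\ell_i,\ell_i')$, $i=1,\dots,5$, spanning five planes $\Pi^{(i)}$ in the pencil with $S\cap\Pi^{(i)}=\ell\cup\ell_i\cup\ell_i'$. Since $\ell$ and $\ell_i$ both lie in $S$ and pass through $\ell\cap\ell_i$, they lie in the tangent plane there, which is therefore $\Pi^{(i)}$; likewise for $\ell\cap\ell_i'$. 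Thus both points lie in $\gamma_\ell^{-1}([\Pi^{(i)}])$, a single point, so $\ell$, $\ell_i$, $\ell_i'$ are concurrent and their common point $P_i$ is an Eckardt point on $\ell$; the $P_i$ are pairwise distinct because $(\ell_i\cup\ell_i')\cap(\ell_j\cup\ell_j')=\emptyset$ for $i\ne j$. Conversely every Eckardt point $P\in\ell$ has $\Gamma_P=\ell\cup m\cup m'$ with $m,m'$ lines of $S$ through $P$ coplanar with $\ell$, so $(m,m')$ is one of the five pairs and $P=P_i$ for that $i$. Hence $\ell$ carries exactly five Eckardt points.

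The step I expect to be the main obstacle is the equivalence in the first paragraph: making fully rigorous the passage from the set-theoretic fibre $\ell\cap\mathrm{Sing}(S\cap\Pi)$ to the statement ``$P$ parabolic $\iff$ $\gamma_\ell$ ramified at $P$'', which requires matching each singularity type of $\Gamma_P=\ell\cup C$ (a node with $C$ irreducible, a node with $C$ a pair of lines through a vertex on $\ell$, a cusp with $C$ irreducible, or an ordinary triple point) to ramified versus unramified behaviour of $\gamma_\ell$, and checking in particular that the non-parabolic case yields a reduced length-two fibre while the cuspidal and Eckardt cases each yield a single doubled point. Once this is in place, the Riemann--Hurwitz bookkeeping in (i)--(ii) is routine given the standard fact that a wildly ramified point of index $p$ contributes at least $p$ to the different, and the count in (iii) is a short combinatorial argument from Theorem~\ref{thm:27}.
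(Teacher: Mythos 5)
This lemma is quoted from Siksek \cite[Lemma 2.2]{Siksek} and the present paper gives no proof of it, so there is no in-paper argument to compare yours against; assessed on its own terms, your proof is correct. Your route --- first showing that $P\in\ell(\overline{K})$ is parabolic exactly when the set-theoretic fibre $\gamma_\ell^{-1}(\gamma_\ell(P))$ is a single point, then doing Riemann--Hurwitz bookkeeping for (i) and (ii), and using pure inseparability together with the Cayley--Salmon configuration for (iii) --- goes through. The case analysis you flag as the main obstacle does close up: since $\mathrm{Sing}(C)\subseteq C$, the fibre over $[\Pi_P]$ is just $\ell\cap C$ as a set; if $C$ is irreducible this has two points iff $\ell$ is transverse to $C$ iff $\Gamma_P$ has a node at $P$; if $C=m\cup m'$ is a line pair with vertex $v$, then either $v\notin\ell$, in which case $\ell\cap m\ne\ell\cap m'$ (equality would force $v\in\ell$) and each of the two points is a node of $\Gamma_P$, or $v\in\ell$, in which case the fibre is $\{v\}$ and $v$ is Eckardt, hence parabolic. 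Two steps you leave implicit are fine but worth spelling out: in (ii) the existence of at least one ramification point follows from $\deg\mathfrak{R}=2>0$, and in (i) the two ramification points are necessarily distinct because a tamely ramified point of a degree-$2$ map contributes exactly $e_P-1\leq 1$ to $\mathfrak{R}$. Finally, the five Eckardt points in (iii) are pairwise distinct because the pairs $(\ell_i,\ell_i')$ are mutually disjoint, as you note.
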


Finally we shall need the following result, which in effect says that we can restrict ourselves
to fields having at most $11$ elements in the proofs of Theorems~\ref{thm:4+}--\ref{thm:F2}.
\begin{thm} (Siksek \cite[Theorem 1]{Siksek}) \label{thm:siksek}
Let $K$ be a field with at least $13$ elements. Let $S$ be a smooth cubic surface over $K$. Suppose $S$ contains a pair of skew lines both defined over $K$. Let $P \in S(K)$ be a point on either line that is not an Eckardt point. Then $Span(P) =
S(K)$.
\end{thm}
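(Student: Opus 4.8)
The plan is to first force every $K$-point of each of the two skew lines into $\Span(P)$, and then to fill in the remaining points of $S(K)$ using the two pencils of plane sections through those lines. Write $\ell$ for the line of the skew pair containing $P$ and $\ell^\prime$ for the other. Since $P$ is not Eckardt, $\Gamma_P=\ell\cup C$ with $C$ an irreducible (hence smooth) conic that meets $\ell$ only at $P$, where it is tangent to $\ell$; as $C$ has the $K$-point $P$, it has $\#K+1$ rational points. For any $Q\in C(K)\setminus\{P\}$ the line $PQ$ lies in $\Pi_P$; it is not contained in $S$ (otherwise it would be a $\overline{K}$-line of $S$ through $P$, hence a component of $\Gamma_P$, forcing $PQ=\ell$ and $Q\in\ell$), and since $PQ\subset\Pi_P$ we get $PQ\cdot S=PQ\cdot\Gamma_P=(PQ\cdot\ell)+(PQ\cdot C)=P+(P+Q)=2P+Q$, so $PQ$ is a tangent line at $P$ whose residual point is $Q$. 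Hence $C(K)\subseteq\Span(P)$.

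Next I would deduce $\ell(K)\subseteq\Span(P)$. Fix $w\in\ell(K)$ with $w\neq P$, so $w\notin C$, and consider the degree-$2$ morphism $\pi_w\colon C\to\PP^1$ given by projection from $w$. If $\pi_w$ were injective on $K$-points, then for every $Q\in C(K)$ the line $wQ$ would have to be tangent to $C$ at $Q$ (otherwise the residual point of $wQ\cdot C$, which is defined over $K$, would be a second $K$-point of $C$ collinear with $w$); but only two points of $C$ have their tangent line through $w$, so $\#C(K)\leq2$, contradicting $\#C(K)=\#K+1\geq14$. Hence $\pi_w$ is not injective on $K$-points, so some $K$-secant $QQ^\prime$ of $C$ passes through $w$; it is not contained in $S$ (a chord of $C$ lying on $S$ would be a component of $\Gamma_P$, hence $\ell$, forcing $Q,Q^\prime\in\ell\cap C=\{P\}$), and $QQ^\prime\cdot S=Q+Q^\prime+w$ with $Q,Q^\prime\in C(K)\subseteq\Span(P)$, whence $w\in\Span(P)$. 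For $\ell^\prime$: the point $\Pi_P\cap\ell^\prime$ lies on $\Gamma_P$ but not on $\ell$, hence on $C$, so it lies in $\Span(P)$; more generally, for each non-Eckardt $P^\prime\in\ell(K)$ the residual conic $C_{P^\prime}$ of $\ell$ in $\Pi_{P^\prime}$ satisfies $C_{P^\prime}(K)\subseteq\Span(P^\prime)\subseteq\Span(P)$ by the two steps just carried out, and $C_{P^\prime}$ contains $\Pi_{P^\prime}\cap\ell^\prime$. Since $P^\prime\mapsto\Pi_{P^\prime}$ is the degree-$2$ Gauss map $\gamma_\ell$, as $P^\prime$ ranges over the non-Eckardt points of $\ell(K)$ the points $\Pi_{P^\prime}\cap\ell^\prime$ take more than five distinct values once $\#K\geq13$; as $\ell^\prime$ carries at most five Eckardt points (Lemma~\ref{lem:paraline}), some non-Eckardt $v\in\ell^\prime(K)\cap\Span(P)$ exists, and repeating the above with $(\ell^\prime,v)$ in place of $(\ell,P)$ gives $\ell^\prime(K)\subseteq\Span(P)$.

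Now I would fill in $S(K)$. For $v\in\ell^\prime(K)$ such that $\langle\ell,v\rangle$ is not one of the five tritangent planes through $\ell$, the residual conic $C^{(v)}$ of $\ell$ in the plane section $\langle\ell,v\rangle\cap S=\ell\cup C^{(v)}$ is a smooth conic through $v\in\ell^\prime(K)\subseteq\Span(P)$; using the lines $vw$ with $w\in\ell(K)$ exactly as above (each meets $\ell$ at $w$ and $C^{(v)}$ at a residual point, and is not contained in $S$), one gets $C^{(v)}(K)\subseteq\Span(P)$. Every $x\in S(K)\setminus\ell$ lies on such a $C^{(v)}$, with $v=\langle\ell,x\rangle\cap\ell^\prime$, unless $\langle\ell,x\rangle$ is tritangent --- that is, unless $x$ lies on one of the ten lines of $S$ meeting $\ell$. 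So $\Span(P)$ already contains all of $S(K)$ off those ten lines, and it remains only to show $m(K)\subseteq\Span(P)$ for each line $m\subset S$ meeting $\ell$. For such $m$: if $m$ meets $\ell$ --- or any line already known to lie in $\Span(P)$ --- at a non-Eckardt point, then that point is in $\Span(P)$ and the two basic steps applied to $m$ give $m(K)\subseteq\Span(P)$; the remaining lines are then reached by propagating this along the incidence graph of the $27$ lines (Theorem~\ref{thm:27}), using that each line meets $10$ others while carrying at most $5$ Eckardt points.

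The genuinely delicate part is this last step --- covering the points on the finitely many lines of $S$ meeting $\ell$ --- which comes down to navigating the incidence combinatorics of the $27$ lines while routing around Eckardt points; it is most awkward in characteristic $2$, where a single line can carry as many as five Eckardt points. The counting used in the second and third steps (and in running the propagation) is where the hypothesis $\#K\geq13$ enters, and one should expect these estimates to weaken, and eventually fail, as $\#K$ shrinks --- which is exactly why $\#K=2$ and $\#K=3$ require the separate arguments of Theorems~\ref{thm:F3} and~\ref{thm:F2}.
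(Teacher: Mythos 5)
The paper itself offers no proof of this statement---it is imported verbatim from Siksek and used as a black box---but its own Lemmas~\ref{lema:3.1,4+}, \ref{lema:3.2,4+} and \ref{lema:3.4,4+} reprove a stronger version (for $\#K\geq 4$), so your attempt can be measured against those. Your opening step contains the main gap: from ``$P$ is not Eckardt'' you cannot conclude that $\Gamma_P=\ell\cup C$ with $C$ an irreducible conic meeting $\ell$ only at $P$. That description is correct only when $P$ is parabolic and lies on no second line of $S$. If $P$ is non-parabolic then $C\cdot\ell=P+P'$ with $P'\neq P$, and your tangent/secant argument only yields $\Gamma_P(K)\setminus\{P'\}\subseteq\Span(P)$: the line $PP'$ is $\ell$ itself, and any line through a point $w\in\ell$ and through $P'$ is again $\ell$, so no admissible secant ever produces $P'$. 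Capturing $P'$ is the real work here and occupies most of the proof of Lemma~\ref{lema:3.1,4+} (one passes to $\Gamma_R$ for a suitable third point $R\in\ell(K)$ and must separately treat the case where $\Gamma_R$ degenerates into three lines). Worse, if $P$ happens to lie on a second line of $S$ (perfectly compatible with $P$ being non-Eckardt), then $\Gamma_P$ is a union of three lines and there is no conic at all; this case, the subject of Lemma~\ref{lema:3.2,4+}, is simply absent from your argument. A smaller but genuine error in the same step: in characteristic $2$ all tangents of a smooth conic are concurrent at its nucleus, so the count ``only two points of $C$ have their tangent line through $w$'' fails when $w$ is that nucleus (which lies on $\ell$ in the parabolic case). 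The fix is that you never needed a genuine secant: a tangent line $wQ$ with $Q\in C(K)$ satisfies $wQ\cdot S=w+2Q$ and already puts $w$ in $\Span(Q)$.

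The final step is also not a proof. Sweeping out $S(K)$ by the pencil of planes through $\ell$ leaves exactly the $K$-points on the ten lines of $S$ meeting $\ell$, and ``propagating along the incidence graph of the $27$ lines while routing around Eckardt points'' is a plan rather than an argument---as you concede, that is where the difficulty concentrates, especially in characteristic $2$ where a line may carry five Eckardt points. The standard device (Lemma~\ref{lema:3.4,4+}, following Siksek) avoids the incidence combinatorics entirely: for $x\in S(K)$ off both skew lines, the line $\langle \ell_1,x\rangle\cap\langle \ell_2,x\rangle$ passes through $x$ and meets each $\ell_i$ in a $K$-point $Q_i$; if it is not contained in $S$ it exhibits $x$ directly as the residual point of a secant through $Q_1,Q_2\in\Span(\ell_1(K)\cup\ell_2(K))$, and the degenerate case where it lies on $S$ is handled by one further application of the conic lemma. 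I would recommend replacing your pencil-plus-propagation scheme with that argument, and restructuring the first two steps as a case analysis on the shape of $\Gamma_P$.
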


\section{Proof of Theorem~\ref{thm:4+}}

\begin{mylem}
\label{lema:3.1,4+}
Let $K$ be a field with at least $4$ elements and $S$ a smooth cubic surface defined over $K$. Let $\ell$ 
be a $K$-line on $S$. Let $P \in \ell(K)$ be a point that does not lie on
any other line belonging to $S$. Then
\[
\ell(K) \subseteq \Gamma_P(K) \subseteq \Span(P).
\]
\end{mylem}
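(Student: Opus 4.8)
The plan is to prove the two inclusions separately, with the second being the crux. For the first inclusion $\ell(K) \subseteq \Gamma_P(K)$: since $P$ lies on $\ell \subset S$ and $\ell$ passes through $P$, the line $\ell$ is an asymptotic line at $P$, hence $\ell \subseteq \Pi_P$, and therefore $\ell \subseteq S \cdot \Pi_P = \Gamma_P$. Thus every $K$-point of $\ell$ is a $K$-point of $\Gamma_P$, giving the left-hand inclusion for free.

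For the inclusion $\Gamma_P(K) \subseteq \Span(P)$, I would first use the hypothesis that $P$ lies on no other line of $S$ to pin down the structure of $\Gamma_P$. Since $P$ lies on a line of $S$ but $S$ has no second line through $P$, $P$ cannot be Eckardt; by the discussion preceding Lemma~\ref{lem:paraline}, $\Gamma_P$ is then either nodal/cuspidal-irreducible-plus-nothing, or (the relevant case, as noted in the excerpt) $\Gamma_P = \ell \cup C$ with $C$ an irreducible conic tangent to $\ell$ at $P$. In fact the "no other line through $P$" hypothesis forces $C$ to be irreducible (a reducible $C$ would give a second line of $S$ through $P$), so $\Gamma_P = \ell \cup C$ exactly. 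Now fix any point $T \in C(K)$ with $T \neq P$. The plane $\Pi_P$ cuts $S$ in $\Gamma_P$, and any $K$-line inside $\Pi_P$ through two points of $\Gamma_P(K)$ meets $S$ in a third point of $\Gamma_P(K)$, so $\Gamma_P(K)$ is \emph{closed} under the secant–tangent operation restricted to lines in $\Pi_P$; thus it suffices to show we can reach some second point of $\Gamma_P(K)$ from $P$ alone, and then bootstrap to all of $\Gamma_P(K)$.

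The key step, and the main obstacle, is generating enough points \emph{inside} the cubic $\Gamma_P \subseteq \Pi_P \cong \PP^2$ starting from the single point $P$. Here I would exploit the tangent line to $\Gamma_P$ at $P$: the tangent line to the conic $C$ at $P$ is exactly $\ell$ (since $\ell$ is tangent to $C$ at $P$), so that tangent construction is degenerate and returns $P$; instead I use the \emph{other} branch — the tangent line to $S$ at $P$ within $\Pi_P$ that is transverse to $C$, i.e. a line in $\Pi_P$ through $P$ meeting $C$ at one further point $Q \in C(K)$ and meeting $\ell$ nowhere new. Concretely, parametrize the conic $C$ by $\PP^1$ over $K$ with $P$ corresponding to a $K$-point of $\PP^1$; lines in $\Pi_P$ through $P$ cut out on $C$ a degree-$2$ divisor containing $P$, hence a residual $K$-point, giving a map $\PP^1(K) \to C(K)$ that hits every point of $C(K)$ as the line varies, \emph{provided} we already have the line — but to form a secant line in $\Span$ we need two known points. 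The resolution is the standard one: use the line $\ell$ itself. Since $\ell \subseteq \Span(P)$ is \emph{not} yet known (that is what we are proving), instead I first get one new point off $P$ via the tangent plane construction on the ambient surface $S$ — the tangent line to $\Gamma_P$ at $P$ along the conic branch is a $K$-line meeting $S$ at $P$ with multiplicity $2$ and at one residual point $R \in \Gamma_P(K)$ with $R \neq P$ (using $\# K \geq 4$ to ensure this tangent direction is $K$-rational and $R \ne P$, which is where the cardinality hypothesis enters, exactly as in Siksek's argument). Once we have $P, R \in \Span(P)$ with both on $\Gamma_P$, all secants through pairs of points of $\Gamma_P(K)$ stay in $\Gamma_P(K) \cap \Span(P)$; a counting/parametrization argument on the conic $C$ — using $\#C(K) = \#K + 1 \geq 5$ — then shows these secants sweep out all of $C(K)$, and finally secants joining points of $C(K)$ to $P$ land on $\ell(K)$, yielding $\ell(K) \subseteq \Span(P)$ and hence $\Gamma_P(K) = \ell(K) \cup C(K) \subseteq \Span(P)$. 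The delicate point throughout is rationality of the auxiliary tangent/secant lines and ensuring the residual intersection point is genuinely new; managing these small-field rationality issues with only $\#K \geq 4$ available is where the real work lies.
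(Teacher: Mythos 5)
Your first inclusion and the identification $\Gamma_P=\ell\cup C$ with $C$ an irreducible conic are fine, but there is a genuine gap: you assume throughout that $\ell$ is tangent to $C$ at $P$, i.e.\ $C\cdot\ell=2P$. That is only the parabolic case. In general $C\cdot\ell=P+P^\prime$ with possibly $P^\prime\neq P$, and this second case is the whole difficulty of the lemma. In the tangent case the argument is easy and needs no hypothesis on $\#K$: every $K$-line in $\Pi_P$ through $P$ other than $\ell$ meets $S$ in $2P+Q$ with $Q\in C(K)$, so $C(K)\subseteq\Span(P)$, and secants joining two points of $C(K)\setminus\{P\}$ (not, as you write, secants joining points of $C(K)$ to $P$) then sweep out $\ell(K)$. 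But when $P^\prime\neq P$, the point $P^\prime$ is unreachable by any line inside $\Pi_P$: every such line through $P^\prime$ other than $\ell$ meets $S$ in $2P^\prime+Q$, so it can only produce new points \emph{from} $P^\prime$, never produce $P^\prime$ itself. The paper escapes $\Pi_P$ by choosing an auxiliary non-Eckardt point $R\in\ell(K)$ with $R\neq P,P^\prime$ and $\Pi_R\neq\Pi_P$ --- this is exactly where $\#K\geq 4$ is used, via separability of $\gamma_\ell$ and the bound of two Eckardt points on $\ell$, to guarantee such an $R$ exists --- and then extracts $P^\prime$ from $\Gamma_R(K)$, with a further sub-case when $\Gamma_R$ degenerates into three lines. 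None of this appears in your proposal.

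A secondary confusion: having correctly observed that in the tangent case the tangent line to $C$ at $P$ is $\ell$ itself and so yields no residual point, you nevertheless propose to obtain your first new point as the residual intersection of ``the tangent line to $\Gamma_P$ at $P$ along the conic branch''; that line is $\ell$, lies on $S$, and gives nothing. The new points come from the non-tangent $K$-lines through $P$ in $\Pi_P$, and no rationality issue arises for those, so your stated role for the hypothesis $\#K\geq 4$ is also misplaced.
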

\begin{proof}
The curve $\Gamma_P$ has degree $3$. The line $\ell$ is an irreducible component of $\Gamma_P$,
and there are no other lines in $S$ passing through $P$. 
Thus $\Gamma_P= \ell \cup C$, where $C$ is an irreducible conic. 
Note $C \cdot \ell = P+P^\prime$ where $P^\prime$ is a point in $S(K)$.
Note also that since $P^\prime$ lies on both $\ell$ and $C$, any line
$\ell^\prime \subset \Pi_P$, $\ell^\prime \neq \ell$ going through $P^\prime$
will have a double intersection at $P^\prime$, i.e. $\Pi_P$ is the tangent
plane at $P^\prime$ and hence $\Gamma_{P^\prime} = \Gamma_P$.
We want to show that $\Gamma_P(K) \subseteq \Span(P)$.

Let $Q \in C(K)$, $Q \ne P, P^\prime$.
Let $\ell^\prime$ be the line joining $P$ and $Q$. Then $\ell^\prime \cdot S=2P+Q$. Thus $Q \in \Span(P)$.
Hence $C(K) \backslash \{P^\prime\} \subseteq \Span(P)$. We now want to 
show that $\ell(K) \backslash \{P^\prime\} \subseteq \Span(P)$. Fix $Q \in C(K)\backslash\{P,P^\prime\}$, let 
$R \in \ell(K)\backslash\{P,P^\prime\}$ and let $\ell^\prime$ be
the line joining $Q$ and $R$. Then $\ell^\prime \cdot S=Q+R+R^\prime$, where $R^\prime \in C(K)$.
Since $Q$, $R^\prime \in C(K)\backslash\{P^\prime\} \subseteq \Span(P)$, we have $R \in \Span(P)$. Thus
$\Gamma_P(K)\backslash\{P^\prime\} \subseteq \Span(P)$.

If $P=P^\prime$, then we have $\Gamma_P(K)\subset\Span(P)$ and we are done. 
Suppose now that $P \ne P^\prime$. To complete the proof,
we must show that $P^\prime \in \Span(P)$. 
As $P \ne P^\prime$ but $\Gamma_P=\Gamma_{P^\prime}$, it follows from Lemma~\ref{lem:paraline} that
 $\gamma_\ell$ 
is separable, and that therefore the line $\ell$ contains at most
two Eckardt points. Since $\lvert K \rvert \geq 4$, the line $\ell$ has at least
five $K$-rational points, and so there is some point $R \in \ell(K)$ that 
is neither Eckardt, nor equal to $P$, $P^\prime$. As noted above $\Pi_P=\Pi_{P^\prime} \supset \ell\cup C$.
As $\gamma_{\ell}$ has degree $2$, we see that $\Pi_R \ne \Pi_P$. 
There are now two cases to consider. The first is when $\Gamma_R=\ell \cup C^\prime$
where $C^\prime$ is an irreducible conic, and the second is when $\Gamma_R$ is a union
of three lines. For the first case, we have 
\[
\Gamma_R(K) \backslash \{R^\prime\} \subseteq 
\Span(R) \subseteq \Span(P)
\]
where $\ell \cdot C^\prime=R+R^\prime$. Note, $P^\prime \neq R^\prime$,
as $\Pi_{P^\prime}=\Pi_P \ne \Pi_R=\Pi_{R^\prime}$. Hence $P^\prime \in \Span(P)$,
and the proof is complete in this case.

Finally, we must consider the case where $\Gamma_R$ is the union of three lines, which 
must include $\ell$. Let the other two lines be $\ell_2$ and $\ell_3$, where $\ell \cdot \ell_2=R$,
$\ell \cdot \ell_3=R^\prime$ and $\ell_2 \cdot \ell_3=R^{\prime\prime}$. As $R$
is not Eckardt, $R$, $R^\prime$ and $R^{\prime \prime}$ are distinct. Since $\ell$ and $R$ 
are $K$-rational, so are the lines $\ell_2$, $\ell_3$ and the points $R^\prime$
and $R^{\prime\prime}$.

First note that both $R$ and $R^\prime$ are in $\Span(P)$ as they both lie on $\ell$ and are not equal to $P^\prime$. 
Let $Q \in \ell_3(K)$, $Q \ne R^\prime$, $R^{\prime\prime}$. Let $m$ be the line joining $R$ and $Q$.
Then $m\cdot S=2R+Q$, and so $Q \in \Span(P)$. Thus
\[
\ell_3(K) \backslash \{R^{\prime\prime}\} \subseteq \Span(P)
\]
and likewise
\[
\ell_2(K) \backslash \{R^{\prime\prime}\} \subseteq \Span(P).
\]
Take $Q \in \ell_3(K)$, $Q \ne R^\prime$, $R^{\prime\prime}$. Let $m$ be the line joining $P^\prime$ and $Q$.
Then $m \cdot S=P^\prime+Q+Q^\prime$, where $Q^\prime \in \ell_2(K)$, $Q^\prime \ne R$, $R^{\prime\prime}$.
Thus $P^\prime \in \Span(P)$, completing the proof.
\end{proof}

The following lemma is a strengthening of Lemma~\ref{lema:3.1,4+}.
\begin{mylem}
\label{lema:3.2,4+}
Let $K$ be a field with at least 4 elements and $S$ a smooth cubic surface defined over $K$. 
Let $\ell$ be a $K$-line on $S$. Let $P \in \ell(K)$ and suppose that $P$ is not an
Eckardt point. Then
\[
\ell(K) \subseteq \Gamma_P(K) \subseteq \Span(P).
\]
\end{mylem}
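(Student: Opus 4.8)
My plan is to reduce everything to Lemma~\ref{lema:3.1,4+}. Since $P$ is not Eckardt it lies on at most two lines of $S$. If $P$ lies only on $\ell$, then $\Gamma_P=\ell\cup C$ with $C$ an irreducible conic, so $P$ lies on no other line of $S$ and Lemma~\ref{lema:3.1,4+} applies directly. So assume $P$ lies on a second line $\ell'$; then $\Gamma_P$, being a plane cubic containing $\ell$ and $\ell'$, is a union $\ell\cup\ell'\cup\ell''$ of three distinct lines forming a triangle with vertices $P=\ell\cap\ell'$, $A=\ell\cap\ell''$, $B=\ell'\cap\ell''$. The preliminary facts I would record are: $A\neq B$ (else $P\in\ell''$, so $P$ would lie on three lines); $\ell'$ and $\ell''$ are defined over $K$ (the residual conic $\ell'\cup\ell''$ of $\ell$ in the $K$-plane $\Pi_P$ is defined over $K$, and $\ell'$, $\ell''$ cannot be a conjugate pair, since then $P=P^{\sigma}\in\ell'^{\sigma}=\ell''$), hence $A,B\in S(K)$; $A$ and $B$ are not Eckardt, because $\Pi_P$ is tangent to $S$ at every vertex of the triangle, so $\Pi_A=\Pi_B=\Pi_P$ and the only lines of $S$ through $A$ are the components of $\Gamma_A=\Gamma_P$ through $A$, namely $\ell$ and $\ell''$; and $\gamma_{\ell''}$, $\gamma_{\ell}$, $\gamma_{\ell'}$ are separable, since e.g.\ $\gamma_{\ell''}$ is a degree-$2$ map with $\gamma_{\ell''}(A)=\gamma_{\ell''}(B)=[\Pi_P]$ and $A\neq B$, so by Lemma~\ref{lem:paraline} each of $\ell,\ell',\ell''$ carries at most two Eckardt points and, as $\lvert K\rvert\ge 4$, at least five $K$-points.

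Next I would generate most of $\ell''$: for a $K$-line $m\subseteq\Pi_P$ through $P$ other than $\ell,\ell'$ one has $m\cdot S=m\cdot\Gamma_P=2P+(m\cap\ell'')$, so letting $m$ vary yields $\ell''(K)\setminus\{A,B\}\subseteq\Span(P)$, a set of at least three points. I then pick a non-Eckardt point $R\in\ell''(K)\setminus\{A,B\}$ (at most two further points are excluded). Since the fibre of $\gamma_{\ell''}$ over $[\Pi_P]$ is exactly $\{A,B\}$ we have $\Pi_R\neq\Pi_P$, and $\Gamma_R=\ell''\cup D_R$ where the residual $D_R$ is an irreducible conic of class $H-\ell''$ (if $R$ lies on no other line of $S$) or a pair of lines $\ell_j\cup\ell_k$, with $\ell_j,\ell_k$ skew to $\ell$ and $\ell'$ by Theorem~\ref{thm:27} and hence meeting $\ell''$ away from $A,B$ (if $R$ lies on a second line $\ell_j$). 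Running the tangent process at $R$ inside $\Pi_R$ gives relations $m\cdot S=2R+(\text{a point of }D_R)$; combining these with secants inside $\Pi_R$ and with $\ell''(K)\setminus\{A,B\}\subseteq\Span(P)$ shows $D_R(K)\subseteq\Span(P)$ in the irreducible case and $D_R(K)\setminus\{\ell_j\cap\ell_k\}\subseteq\Span(P)$ in the reducible case.

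Now I would deduce $A\in\Span(P)$ and, symmetrically, $B\in\Span(P)$. The point $A$ lies in $\Pi_R$ but not on $D_R$, so the lines through $A$ in $\Pi_R$ cut out on $D_R$ an involution defined over $K$; as $\lvert D_R(K)\rvert\ge 5$ while at most one point of $D_R(K)$ is missing from $\Span(P)$, one can avoid the single line through $A$ responsible for it (and the line $\ell''$ itself) and find a line meeting $S$ in $A$ together with two points of $\Span(P)$, so $A\in\Span(P)$. Finally I would ``rotate the triangle'': $A$ is a non-Eckardt point of $\ell$ lying on the second line $\ell''$, with $\Gamma_A=\Gamma_P$, so the generation step of the second paragraph applied with $A$ in place of $P$ (the line opposite $A$ now being $\ell'$) yields $\ell'(K)\setminus\{P,B\}\subseteq\Span(P)$; applied to $B$ it yields $\ell(K)\setminus\{P,A\}\subseteq\Span(P)$. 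As $P,A,B\in\Span(P)$, this gives $\ell(K),\ell'(K)\subseteq\Span(P)$, and together with $\ell''(K)\setminus\{A,B\}\subseteq\Span(P)$ it gives $\Gamma_P(K)=\ell(K)\cup\ell'(K)\cup\ell''(K)\subseteq\Span(P)$; the inclusion $\ell(K)\subseteq\Gamma_P(K)$ is trivial.

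The main obstacle I anticipate is the analysis of the auxiliary plane $\Pi_R$ in the case where $R$ lies on a second line of $S$: there $\Pi_R$ cuts out three lines rather than a line and a conic, and one must keep precise track of which of their $K$-points are already known to lie in $\Span(P)$. More broadly, the whole argument turns on the bookkeeping that each line or conic appearing has at least five $K$-points while the points one is forced to set aside at each stage (the at most two Eckardt points on a line, the triangle vertices, a bounded number of lines through a fixed point) never exhaust them; over the borderline fields $\F_4$ and $\F_5$ these counts are tight and must be tracked exactly rather than estimated.
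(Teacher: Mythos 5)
Your argument is correct and follows essentially the same route as the paper's proof: reduce to Lemma~\ref{lema:3.1,4+} when $P$ lies on no second line, and otherwise work with the triangle $\Gamma_P$, generate the opposite side minus its two vertices by tangents at $P$, pick a non-Eckardt $K$-point $R$ there (using separability of $\gamma_{\ell''}$ and $\#K\ge 4$), and use $\Gamma_R$ to recover the two remaining vertices before propagating around the triangle. The only, harmless, difference is that where the paper simply re-invokes Lemma~\ref{lema:3.1,4+} at $R$ in the line-plus-conic case, you capture $A$ and $B$ directly via a secant in $\Pi_R$ through two points of the residual curve $D_R$.
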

\begin{proof}
Let $P \in \ell(K)$ be a non-Eckardt point. 
If $P$ does not lie on any other line contained in $S$ then we can invoke Lemma~\ref{lema:3.1,4+}.
Thus we may suppose that $P$ lies on some other line $\ell_2$. This is necessarily a $K$-line
because if it were not, its conjugate line would also pass through $P$, meaning that $P$ were
an Eckardt point, which would contradict the hypotheses of the lemma. Now $\Gamma_P=\ell \cup \ell_2 \cup \ell_3$,
where $\ell_3$ is also a $K$-line. As $P$ is not Eckardt, $\ell_3$ does not pass through $P$.
Let $\ell \cdot \ell_3=P^\prime$ and $\ell_2 \cdot \ell_3=P^{\prime\prime}$.  As before 
\[
\ell_3(K)\backslash \{P^\prime,P^{\prime\prime}\} \subseteq \Span(P).
\]
As in the proof of Lemma~\ref{lema:3.1,4+}, $\gamma_{\ell_3}$ is separable, and so by Lemma~\ref{lem:paraline},
there are at most two Eckardt points on $\ell_3$. Since $\ell_3(K)$ has at least $5$ points,
we see that there is some $Q \in \ell_3(K)\backslash \{P^\prime, P^{\prime\prime}\}$ that is not Eckardt.
We consider two cases. The first is where $Q$ does not lie on any other line. Then, by Lemma~\ref{lema:3.1,4+},
$\ell_3(K) \subseteq \Span(Q) \subseteq \Span(P)$. Thus $P$, $P^\prime$, $P^{\prime\prime} \in \Span(P)$.
As before, we can generate the remaining points in $\Gamma_P(K)=\ell(K) \cup \ell_2(K) \cup \ell_3(K)$,
from these.

The remaining case is when $Q$ lies on some other line $\ell_4$ and so $\Gamma_Q=\ell_3 \cup \ell_4 \cup \ell_5$.
Just as in the argument at the end of the proof of Lemma~\ref{lema:3.1,4+}, we can show that $P$, $P^\prime$
and $P^{\prime\prime}$ are in $\Span(P)$ and complete the proof.
\end{proof}

The following propositions will be useful.

\begin{prop}
\label{prop:skewlines}
Let $K$ be a field with at least $4$ elements and 
$S$ be a smooth cubic surface defined over $K$.
Suppose $S$ contains a skew pair of lines $\ell$, $\ell^\prime$ 
and there is a non-Eckardt point $P \in \ell(K)$ such that 
$\Pi_P\cdot\ell^\prime$ is also non-Eckardt.
Then
\[
 \ell^\prime(K) \subseteq \Span(\ell(K)).
\]
\end{prop}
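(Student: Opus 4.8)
The plan is to deduce the proposition from two applications of Lemma~\ref{lema:3.2,4+}. Set $Q = \Pi_P \cdot \ell^\prime$, the point cut out on $\ell^\prime$ by the tangent plane to $S$ at $P$. The first thing to check is that this genuinely is a $K$-rational point of $S$ lying on $\Gamma_P$. Since $P \in \ell \subseteq S$ we have $\ell \subseteq \Pi_P$; if $\ell^\prime$ were also contained in $\Pi_P$, then $\ell$ and $\ell^\prime$ would be two lines in the plane $\Pi_P \cong \PP^2$ and would therefore meet, contradicting the hypothesis that $\ell$ and $\ell^\prime$ are skew. Hence $\ell^\prime \not\subseteq \Pi_P$, so $\Pi_P \cap \ell^\prime$ is a single point $Q$, which is defined over $K$ because both $\Pi_P$ and $\ell^\prime$ are. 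As $Q \in \ell^\prime \subseteq S$ and $Q \in \Pi_P$, the point $Q$ lies on the plane cubic $\Gamma_P = S \cdot \Pi_P$.

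Next I would move $Q$ into $\Span(\ell(K))$ and then run Lemma~\ref{lema:3.2,4+} on the other line. The point $P \in \ell(K)$ is non-Eckardt by hypothesis, so Lemma~\ref{lema:3.2,4+} gives $\Gamma_P(K) \subseteq \Span(P)$; combined with the monotonicity of $\Span$ and the inclusion $\{P\} \subseteq \ell(K)$ this yields $Q \in \Gamma_P(K) \subseteq \Span(P) \subseteq \Span(\ell(K))$. By hypothesis $Q = \Pi_P \cdot \ell^\prime$ is also a non-Eckardt point, and it lies on the $K$-line $\ell^\prime$, so Lemma~\ref{lema:3.2,4+} applied to the pair $(\ell^\prime, Q)$ gives $\ell^\prime(K) \subseteq \Gamma_Q(K) \subseteq \Span(Q)$. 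Since $Q$ already lies in $\Span(\ell(K))$ and $\Span(\ell(K))$ is closed under the secant-and-tangent process, $\Span(Q) \subseteq \Span(\ell(K))$, whence $\ell^\prime(K) \subseteq \Span(\ell(K))$, as claimed.

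I do not expect any serious obstacle here: once Lemma~\ref{lema:3.2,4+} is in hand the argument is essentially formal. The only points needing care are the elementary projective-geometry observation that skewness forces $\ell^\prime$ to meet $\Pi_P$ in exactly one, necessarily $K$-rational, point, so that $Q$ is an honest point of $\Gamma_P$; and the routine verification that $\Span$ is monotone and closed under the process, so that membership $Q \in \Span(\ell(K))$ propagates to the inclusion $\Span(Q) \subseteq \Span(\ell(K))$.
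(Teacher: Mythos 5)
Your proof is correct and is essentially the same as the paper's: both set $Q=\Pi_P\cdot\ell^\prime$ and apply Lemma~\ref{lema:3.2,4+} twice, first to $P$ on $\ell$ to get $Q\in\Span(\ell(K))$ and then to the non-Eckardt point $Q$ on $\ell^\prime$. The extra detail you supply (skewness forcing $\ell^\prime\not\subseteq\Pi_P$, so $Q$ is a single $K$-point of $\Gamma_P$) is a welcome elaboration of a step the paper leaves implicit.
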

\begin{proof}
 Let $Q=\Pi_P\cdot\ell^\prime$. Note that $Q \in \Gamma_P(K)$, thus $Q \in \Span(P) \subseteq \Span(\ell(K))$ 
by Lemma~\ref{lema:3.2,4+}. Suppose $Q$ is non-Eckardt. Applying 
Lemma~\ref{lema:3.2,4+} again we have
\[
 \ell^\prime(K) \subseteq \Span(Q) \subseteq \Span(\ell(K)).
\]
\end{proof}

\begin{prop}
\label{prop:conic}
Let $S$ be a smooth cubic surface defined over a field $K$. 
Suppose $S$ contains a skew pair of $K$-lines $\ell$, $\ell^\prime$ 
and let $P \in \ell(K)$. 
Let $\Gamma_P$ be the union of $\ell$ and an irreducible conic.
Then the point $\ell^\prime \cdot \Pi_P$ is not an Eckardt point.
\end{prop}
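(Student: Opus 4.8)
The plan is to argue by contradiction, using the incidence geometry of the two tangent planes $\Pi_P$ and $\Pi_Q$, where $Q := \ell^\prime \cdot \Pi_P$. First I would record the position of $Q$: since $\ell^\prime \subset S$ meets $\Pi_P$ at $Q$, the point $Q$ lies on $\Gamma_P = \ell \cup C$, and $Q \notin \ell$ because $\ell$ and $\ell^\prime$ are skew; so $Q$ lies on the irreducible conic $C$, and, the singular locus of $\ell \cup C$ being contained in $\ell$, the point $Q$ is a smooth point of $\Gamma_P$. Suppose now, for contradiction, that $Q$ is an Eckardt point. Then $\Gamma_Q$ is a union of three lines of $S$ through $Q$; one of them is $\ell^\prime$ (which is a line of $S$ passing through $Q$), and I denote the other two by $m_1$ and $m_2$.

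Next I would pin down the line $L := \Pi_P \cap \Pi_Q$. Each of $\ell^\prime$, $m_1$, $m_2$ is an asymptotic line at $Q$, hence is contained in $\Pi_Q$; but none of them is contained in $\Pi_P$, since a line in $\Pi_P$ would be a component of $\Gamma_P = \ell \cup C$ and hence equal to $\ell$ or contained in the irreducible conic $C$ --- impossible, as each of $\ell^\prime$, $m_1$, $m_2$ passes through $Q \notin \ell$ and none of them is a conic. In particular $\Pi_P \ne \Pi_Q$, so $L$ is a line; it passes through $Q$, and it is distinct from each of $\ell^\prime$, $m_1$, $m_2$. I would then identify $L$: as $Q$ is a smooth point of $\Gamma_P = S \cdot \Pi_P$ lying on $C$, the tangent line to $\Gamma_P$ at $Q$ equals both the tangent line to the conic $C$ at $Q$ and $\Pi_Q \cap \Pi_P = L$. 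Hence $L$ is tangent to $C$ at $Q$; in particular $L \ne \ell$ (since $Q \notin \ell$), so $L$ meets $\ell$ in exactly one point $A$ (both lines lying in the plane $\Pi_P$), and $A \ne Q$.

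Finally I would extract the contradiction. The point $A$ lies on $\ell \subset S$ and on $L \subset \Pi_Q$, hence on $\Gamma_Q = \ell^\prime \cup m_1 \cup m_2$. It cannot lie on $\ell^\prime$, which is skew to $\ell$, so after relabelling $A \in m_1$. Then $m_1$ contains the two distinct points $Q$ and $A$, both of which lie on $L$, forcing $m_1 = L$ and contradicting the fact that $L$ is distinct from $m_1$. Therefore $Q$ is not an Eckardt point.

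The one step I expect to require care is the identification of $L$ with the tangent line to $C$ at $Q$: it rests on $Q$ being a smooth point of $\Gamma_P$ --- immediate once one notes $Q \notin \ell$ --- together with the standard fact that the tangent line to a plane section $S \cdot \Pi_P$ at a smooth point is cut out of $\Pi_P$ by the tangent plane to $S$ at that point. Everything else is elementary incidence geometry among $\ell$, $\ell^\prime$, and the lines of $\Gamma_Q$; note in particular that the argument is entirely geometric, so no hypothesis on $\# K$ or $\ch(K)$ is needed.
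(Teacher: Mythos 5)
Your argument for the main contradiction is sound and is in substance the same as the paper's: the paper takes $Q_0=\ell\cdot\Pi_E$ (your point $A$, with $E=\ell^\prime\cdot\Pi_P$ your $Q$), observes that the line of $\Gamma_E$ through $Q_0$ forces $\Pi_{Q_0}$ and $\Pi_P$ to be two distinct planes both containing $\ell$ and $E$, and concludes; your line $L=\Pi_P\cap\Pi_Q$ and the identity $m_1=L$ is the same incidence seen from the point of view of lines rather than planes. (Your detour through the tangent line to $C$ at $Q$ is not even needed: $L\neq\ell$ already because $Q\in L\setminus\ell$.)

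However, there is a genuine gap at the outset: you treat ``irreducible conic'' as meaning geometrically irreducible, i.e.\ smooth. The hypothesis --- and the way the proposition is invoked in the proof of Lemma~4, where the alternative case is that $P$ lies on another \emph{$K$-line} of $S$ --- only gives irreducibility over $K$, so a priori $C$ could be a union of two Galois-conjugate $\overline{K}$-lines $m\cup n$. In that case several of your assertions fail: the singular locus of $\ell\cup C$ is not contained in $\ell$ (it contains the node $m\cap n$, which is a $K$-point and could be $Q$); a line \emph{can} be a component of $C$, so your claim that none of $\ell^\prime,m_1,m_2$ lies in $\Pi_P$ breaks down; and ``the tangent line to $C$ at $Q$'' is undefined at a singular point. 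The paper spends the first half of its proof closing exactly this loophole: if $C=m\cup n$ with $m$, $n$ conjugate, then the $K$-point $P$ (a singular point of $\Gamma_P$) lies on both, and the $K$-point $E=\ell^\prime\cdot\Pi_P$ lies on one of them, say $m$; then $m$ is the line joining the distinct $K$-points $P$ and $E$, hence is defined over $K$, contradicting the $K$-irreducibility of $C$. You need to supply this (or an equivalent) step before the rest of your argument applies.
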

\begin{proof}
Let $P \in \ell(K)$.
Let $E =\ell^\prime \cdot \Pi_P$. 
Suppose $\Gamma_P=C \cup \ell$ where $C$ is an irreducible conic. 
We will first show that as $C$ is irreducible, it must be absolutely irreducible. 
Indeed, suppose $C=m \cup n$ where $m$, $n$ are $\overline{K}$-lines, that
are Galois conjugates. The $K$-point $P$ lies on one of them and hence both.
 Now
$E$ is a $K$-point, and $E$ belongs to $\Gamma_P=\ell \cup m \cup n$
as well as $\ell^\prime$. Since $\ell$ and $\ell^\prime$ are skew, without loss of generality
$E\in m$. Hence the line $m$ joins the $K$-points $P$ and $E$ and is therefore
defined over $K$. This contradicts the irreducibility of $C$. 
Hence $C$ is an absolutely irreducible conic.

We must prove that $E$ is not an Eckardt point. Suppose it is.
Let $\ell_2$ and $\ell_3$ be the other two lines going through $E$. 
Let $Q=\ell \cdot \Pi_E$, then without loss of generality $Q = \ell \cdot \ell_2$. 
Note that $\ell_2$ must be in the tangent plane to $S$ at $Q$, so $\Pi_Q \neq \Pi_P$ since $\Gamma_P=\ell \cup C$. 
Note also that $E=\ell^\prime \cdot \Pi_Q$, which implies that $\Pi_Q$ is the unique plane containing $\ell$ and $E$. 
However, the plane $\Pi_P$ also contains $\ell$ and $E$. 
But $\Pi_P \neq \Pi_Q$ so we have reached a contradiction, and the point $\ell^\prime\cdot\Pi_P$ cannot be an Eckardt point.
\end{proof}

\begin{mylem}
\label{lema:3.3,7+}
Let $K$ be a field with at least 7 elements and $\ch(K) \neq 2$. 
Let $S$ be a smooth cubic surface defined over $K$. 
Suppose $S$ contains a skew pair of $K$-lines $\ell$, $\ell^\prime$. 
Then
\[
\ell^\prime(K) \subseteq \Span(\ell(K)).
\]
\end{mylem}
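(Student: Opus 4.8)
The plan is to reduce the lemma to Proposition~\ref{prop:skewlines}: it suffices to produce a single point $P \in \ell(K)$ that is not an Eckardt point and for which $\Pi_P \cdot \ell^\prime$ is also not an Eckardt point, since then Proposition~\ref{prop:skewlines} yields $\ell^\prime(K) \subseteq \Span(\ell(K))$ immediately. So the whole content becomes a counting argument: we must show the ``bad'' points on $\ell$ are too few to exhaust $\ell(K)$.

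To do this I would first set up the natural morphism. The planes of $\PP^3$ through $\ell$ form a pencil, which is a $\PP^1$; since $\ell$ and $\ell^\prime$ are skew, no such plane contains $\ell^\prime$, so projection away from $\ell$ identifies this pencil isomorphically with $\ell^\prime$. Composing the degree-$2$ map $\gamma_\ell : \ell \to \PP^1$ (whose degree is recorded in the discussion preceding Lemma~\ref{lem:paraline}) with this identification gives a morphism
\[
\phi : \ell \longrightarrow \ell^\prime, \qquad \phi(P) = \Pi_P \cdot \ell^\prime ,
\]
which has degree $2$. In particular every fibre of $\phi$ over a $\overline{K}$-point consists of at most $2$ points.

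Now the count. Since $\ch(K) \neq 2$, Lemma~\ref{lem:paraline}(i) shows that $\ell$ contains at most two Eckardt points and, likewise, $\ell^\prime$ contains at most two Eckardt points; write $E_{\ell^\prime}$ for this latter set. Then $\phi^{-1}(E_{\ell^\prime})$ has at most $4$ points, so the set of $P \in \ell(\overline{K})$ that are either Eckardt or satisfy ``$\Pi_P\cdot\ell^\prime$ is Eckardt'' has at most $2 + 4 = 6$ elements. Because $\ell \cong \PP^1_K$ we have $\# \ell(K) = \# K + 1 \geq 8 > 6$, so some $P \in \ell(K)$ avoids all of them. Such a $P$ is non-Eckardt and $\Pi_P \cdot \ell^\prime = \phi(P)$ is non-Eckardt, and Proposition~\ref{prop:skewlines} completes the proof.

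The main point — and the reason the hypothesis is $\#K \geq 7$ rather than something larger — is the degree bound: one could instead restrict to points $P$ lying on no other line of $S$ (so that $\Gamma_P=\ell\cup C$ with $C$ an irreducible conic and Proposition~\ref{prop:conic} applies), but by Theorem~\ref{thm:27} there may be up to ten such forbidden points on $\ell$, which is only beaten when $\#K$ is around $11$. Recognising that $P\mapsto \Pi_P\cdot\ell^\prime$ is a degree-$2$ morphism, so that only about six points of $\ell(\overline{K})$ must be excluded, is exactly what brings the bound down to $\#K\geq 7$; everything else is routine bookkeeping.
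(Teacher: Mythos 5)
Your proof is correct and follows essentially the same route as the paper: both arguments exploit the fact that $\gamma_\ell$ has degree $2$ and that (by Lemma on parabolic points, part (i), since $\ch(K)\neq 2$) each of $\ell$, $\ell'$ carries at most two Eckardt points, to conclude that at most six points of $\ell(K)$ are ``bad'' while $\#\ell(K)\geq 8$, and then both invoke Proposition 1 (the skew-lines proposition). The paper phrases the count as ``at least six non-Eckardt points map to at least three planes, one of which must meet $\ell'$ in a non-Eckardt point,'' which is the same bookkeeping as your $2+4=6$ exclusion.
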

\begin{proof}
By Lemma~\ref{lem:paraline}, there are at most two $K$-rational Eckardt points on each of $\ell$, $\ell^\prime$.
Hence
\[
\#(\ell(K)\setminus \{\text{Eckardt points}\}) \geq 6.
\]
The Gauss map on $\ell$ has degree 2 so 
\[
\#\gamma_\ell(\ell(K)\setminus \{\text{Eckardt points}\}) \geq 3.
\]
Therefore we must have a non-Eckardt $P \in\ell(K)$ mapping to a plane $\gamma_\ell(P)$ that intersects 
$\ell^\prime$ in a non-Eckardt point $Q$. We invoke Proposition~\ref{prop:skewlines} to obtain $\ell^\prime(K) \subseteq \Span(\ell(K))$,
which completes the proof.
\end{proof}

\begin{mylem}
\label{lema:3.3,F4F5F8}
 Let $K$ be $\F_4$, $\F_5$ or $\F_8$ and $S$ be a smooth cubic surface defined over $K$.
Suppose $S$ contains a skew pair of $K$-lines $\ell$, $\ell^\prime$.
Let $P\in\ell(K)$ be a point that is not Eckardt.
Then
\[
 \ell_2(K) \subseteq \Span(\ell_1(K)),
\]
where $\ell_1$, $\ell_2$ is a skew pair of $K$-lines in $S$ that may or may not be equal to $\ell$, $\ell^\prime$.
\end{mylem}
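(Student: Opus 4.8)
The plan is to analyse the tritangent-plane sections $\Gamma_Q$ through the non-Eckardt $K$-points $Q$ of $\ell$ and to split into two cases. By Lemma~\ref{lema:3.2,4+}, $\ell(K)\subseteq\Gamma_P(K)\subseteq\Span(P)$, so $\Span(P)=\Span(\ell(K))$; and for any $K$-line $m\subseteq S$ and any non-Eckardt $Q\in m(K)$ we have $m(K)\subseteq\Gamma_Q(K)\subseteq\Span(Q)$. Now fix a non-Eckardt $Q\in\ell(K)$. Since $\Gamma_Q$ contains $\ell$ it is reducible, hence (Section~2) is $\ell$ together with a geometrically irreducible conic, or a union of three distinct lines one of which is $\ell$. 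In the latter case the three lines are not concurrent (concurrence would make $Q$ an Eckardt point), so $\Gamma_Q$ is a triangle with three distinct vertices, one being $Q\in\ell$; Galois fixes $\ell$ and cannot interchange the other two lines, since that would fix only the vertex off $\ell$ and force $Q$ to equal it, contradicting $Q\in\ell$; hence the other two lines are $K$-rational and $Q$ lies on exactly one of them. So each non-Eckardt $Q\in\ell(K)$ is of exactly one type: \textbf{(A)} $\Gamma_Q=\ell\cup C_Q$ with $C_Q$ a geometrically irreducible conic; or \textbf{(B)} $Q$ lies on a second $K$-line of $S$, and $\Gamma_Q$ is a coplanar triangle of $K$-lines.

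\emph{Case A.} If type (A) holds for some non-Eckardt $Q\in\ell(K)$, then Proposition~\ref{prop:conic}, applied to the skew pair $\ell,\ell^\prime$, shows that $\ell^\prime\cdot\Pi_Q$ is not an Eckardt point, so Proposition~\ref{prop:skewlines} gives $\ell^\prime(K)\subseteq\Span(\ell(K))$ and we are done with $(\ell_1,\ell_2)=(\ell,\ell^\prime)$. This covers in particular the case $\gamma_\ell$ inseparable: there every point of $\ell$ is parabolic, so every non-Eckardt $Q\in\ell(K)$ is parabolic but not Eckardt and hence $\Gamma_Q$ is of type (A) by the description in Section~2.

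\emph{Case B.} Suppose every non-Eckardt $Q\in\ell(K)$ is of type (B). Since $P$ itself is then a non-Eckardt point lying on a second line, $\ell$ is not wholly parabolic, so $\gamma_\ell$ is separable and $\ell$ carries at most two Eckardt points (Lemma~\ref{lem:paraline}); as $\#\ell(K)=\#K+1\ge5$ for $K\in\{\F_4,\F_5,\F_8\}$, there are at least three non-Eckardt $K$-points on $\ell$. Each lies on a second $K$-line; that line together with its coplanar partner (again $K$-rational, by the same Galois argument) is one of the five coplanar pairs of $\ell$ from Theorem~\ref{thm:27}, and the two points where such a pair meets $\ell$ form a $\gamma_\ell$-fibre, so each pair accounts for at most two of these non-Eckardt points. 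Hence at least two distinct coplanar pairs $(m_1,m_1^\prime)$, $(m_2,m_2^\prime)$ occur; pick non-Eckardt $A\in\ell(K)\cap m_1$ and $B\in\ell(K)\cap m_2$, so $\Gamma_A=\ell\cup m_1\cup m_1^\prime$ and $\Gamma_B=\ell\cup m_2\cup m_2^\prime$. By Theorem~\ref{thm:27} the two pairs are disjoint, so $m_1,m_2$ is a skew pair of $K$-lines on $S$. Finally, since $A\in\ell(K)$ is non-Eckardt, $m_1(K)\subseteq\Gamma_A(K)\subseteq\Span(A)\subseteq\Span(\ell(K))$, and likewise $m_2(K)\subseteq\Span(\ell(K))$; and since $A\in m_1(K)$ is non-Eckardt, $\ell(K)\subseteq\Gamma_A(K)\subseteq\Span(A)\subseteq\Span(m_1(K))$, whence $\Span(\ell(K))\subseteq\Span(m_1(K))$. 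Therefore $m_2(K)\subseteq\Span(\ell(K))\subseteq\Span(m_1(K))$, proving the lemma with $(\ell_1,\ell_2)=(m_1,m_2)$.

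The step I expect to need the most care is the bookkeeping in Case~B: confirming that the two new lines $m_1,m_2$ really are defined over $K$ and really are skew (both from the Galois argument together with Theorem~\ref{thm:27}), and checking the inequality $\#K+1-\#\{\text{Eckardt points on }\ell\}\ge3$ uniformly over $K=\F_4,\F_5,\F_8$, where the separable/inseparable dichotomy of Lemma~\ref{lem:paraline} must be tracked (the inseparable case having been routed into Case~A). Proposition~\ref{prop:conic} is precisely what makes the dichotomy (A)/(B) usable, as it licenses the passage from a type-(A) section to $\ell^\prime$; note also that no iteration is required, the flexibility to take $(\ell_1,\ell_2)\ne(\ell,\ell^\prime)$ being used exactly once, in Case~B.
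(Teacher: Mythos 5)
Your proposal is correct and follows essentially the same route as the paper: the same dichotomy between a non-Eckardt point of $\ell$ whose plane section is $\ell$ plus an irreducible conic (handled via Proposition~\ref{prop:conic} and Proposition~\ref{prop:skewlines}) and the case where every non-Eckardt $K$-point of $\ell$ lies on a second $K$-line, in which two of those second lines are shown to be skew and to span each other's $K$-points via Lemma~\ref{lema:3.2,4+}. Your counting in Case~B (at least three non-Eckardt $K$-points, paired by $\gamma_\ell$-fibres, hence at least two disjoint coplanar pairs) is in fact spelled out more carefully than in the paper's own proof.
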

\begin{proof}
 First note that for any non-Eckardt point $P\in\ell(K)$ we have $\ell(K)\subseteq\Span(P)$ by Lemma~\ref{lema:3.2,4+}. 
Suppose $P\in\ell(K)$ is a point such that $\Gamma_P=C\cup\ell$ where $C$ is an irreducible conic. 
Then $Q=\Pi_P\cdot\ell^\prime$ is a non-Eckardt point by Proposition~\ref{prop:conic}. We invoke Lemma~\ref{lema:3.2,4+}
to obtain
\[
 \ell^\prime(K) \subseteq \Span(Q) \subseteq \Span(P) \subseteq \Span(\ell(K)).
\]

Therefore we assume that all points in $\ell(K)$ lie on at least one $K$-line in $S$ other than $\ell$.
Note that this excludes the case where $\ch(K)=2$ and $\gamma_\ell$ is inseparable, since in such cases
we must have at least one non-Eckardt point in $\ell(K)$ and by Lemma~\ref{lem:paraline} any such $P$ must be parabolic, 
and hence $\Gamma_P$ is the union of $\ell$ and an irreducible conic.
Thus we may assume that $\gamma_\ell$ is separable.
In which case we must have four points $P$, $P^\prime$, $R$, $R^\prime \in \ell(K)$ with
$\Gamma_P=\Gamma_{P^\prime}$, $\Gamma_R=\Gamma_{R^\prime}$. Let $\ell_1\subseteq S$ be the $K$-line such that 
$\ell\cdot\ell_1=P$ and $\ell_2\subseteq S$ be the $K$-line such that $\ell\cdot\ell_2=R$. Then $l_1\subseteq\Pi_P$ and
$\ell_2\subseteq\Pi_R$ and so $\ell_1$ is skew to $\ell_2$. Note that $P$ is a non-Eckardt point on $\ell_1$ so
$\ell_1(K) \subseteq \Span(P)$ and likewise $\ell_2(K) \subseteq \Span(R) \subseteq \Span(P)$.
Hence
\[
 \ell_1(K)\cup\ell_2(K) \subseteq \Span(P),
\]
which completes the proof.
\end{proof}

The following lemma is stated in \cite{Siksek} with the hypothesis that $K$ has at least $13$ elements. 
Using our Lemma~\ref{lema:3.2,4+} and by modifying the proof we can strengthen this as follows.

\begin{mylem}\label{lema:3.4,4+}
Let $K$ be a field with at least $4$ elements, and let $S$ be a smooth cubic surface
defined over $K$. Suppose $S$ contains a pair of skew lines $\ell_1$ and $\ell_2$ both defined over $K$. 
If $\# K=4$, then suppose also that at least one of them contains a non-Eckardt $K$-point.
Then
\[
\Span(\ell_1(K)\cup \ell_2(K)) = S(K).
\]
\end{mylem}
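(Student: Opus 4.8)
Proof proposal for Lemma~\ref{lema:3.4,4+}.

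\textbf{Overall strategy.} The goal is to show that starting from all $K$-points on the two skew lines $\ell_1$, $\ell_2$ we can reach every $K$-point of $S$. The natural plan is: (1) use the lines already available to sweep out a large supply of points via the planes through $\ell_1$ (and through $\ell_2$); and (2) show that any $P \in S(K)$ not yet reached lies on a line or conic that meets this supply in enough points to be generated. Concretely, for each point $Q \in \ell_1(K)$, the tangent plane $\Pi_Q$ cuts $S$ in $\Gamma_Q = \ell_1 \cup (\text{rest})$, and by Lemma~\ref{lema:3.2,4+} (applied at non-Eckardt points of $\ell_1$, of which there are at least $5-2=3$ since $\#K \ge 4$ and Lemma~\ref{lem:paraline} bounds Eckardt points by $2$), together with the argument inside that lemma's proof, essentially all of $\Gamma_Q(K)$ lies in $\Span(\ell_1(K))$. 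Doing this as $Q$ ranges over $\ell_1(K)$ fills a union of residual conics/lines covering a Zariski-dense chunk of $S(K)$.

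\textbf{Key steps in order.} First I would record that $\Span(\ell_1(K)) \supseteq \ell_1(K)$ and, by Lemma~\ref{lema:3.2,4+} applied at any non-Eckardt $P \in \ell_1(K)$, that $\Gamma_P(K) \subseteq \Span(\ell_1(K))$. Next, for a \emph{general} plane $\Pi$ through neither $\ell_1$ nor $\ell_2$ (but meeting both), $\Pi \cdot S$ is an irreducible cubic plane curve meeting $\ell_1$ in a point $A$ and $\ell_2$ in a point $B$; if $\Pi \cdot S$ is irreducible over $K$ and has a $K$-point besides $A$, $B$, then by chord-and-tangent on that cubic (which is how one generates points on a plane cubic from three collinear points) its $K$-points lie in $\Span(\{A,B\} \cup \ell_1(K))$. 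The third step is the covering argument: given an arbitrary $T \in S(K)$, choose a $K$-plane $\Pi$ through $T$ that meets $\ell_1$ and $\ell_2$ in $K$-points $A$, $B$ with $A$ non-Eckardt and $\Gamma_A$ (or a related residual conic) passing through the right points — the count $\#\ell_i(K) \ge 5$ gives enough freedom to avoid the finitely many bad planes. Then $T$ lies on a curve already shown to be in $\Span(\ell_1(K) \cup \ell_2(K))$. Finally I would dispose of the residual points $P'$ (the ``extra'' intersection points excluded at each stage, as in Lemma~\ref{lema:3.1,4+}) by the same device used there: pick a third non-Eckardt point on an auxiliary line, use the degree-$2$ Gauss map $\gamma_\ell$ to ensure the tangent planes differ, and push $P'$ in via a secant to a conic or third line.

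\textbf{The main obstacle.} The delicate part is the uniform handling of the ``bad'' configurations: planes $\Pi$ for which $\Pi\cdot S$ is reducible over $K$ (splitting as line $+$ conic or three lines, possibly with Galois-conjugate components), points $T$ lying in all the residual conics' ``missing'' points simultaneously, and — when $\#K = 4$ — the very tight point counts, which is exactly why the extra hypothesis (one line carries a non-Eckardt $K$-point) is imposed. I expect the proof to proceed by first establishing $\ell_2(K) \subseteq \Span(\ell_1(K))$ (Lemma~\ref{lema:3.3,7+} already does this when $\#K \ge 7$ and $\ch K \ne 2$; Lemma~\ref{lema:3.3,F4F5F8} handles $\F_4,\F_5,\F_8$ up to swapping the skew pair), reducing to generating from a \emph{single} line's worth of points, and then running the covering argument above; the genuinely hard case-check is ensuring that for every residual point there is \emph{some} admissible plane, which in the smallest fields forces one to track Eckardt points and the separability of the various $\gamma_\ell$ explicitly.
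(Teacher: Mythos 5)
Your proposal misses the single idea that makes the paper's proof short, and the route you substitute for it has a genuine flaw. The paper's argument is: given $P \in S(K)$ off both lines, take the plane $\Pi_1$ spanned by $\ell_2$ and $P$ and the plane $\Pi_2$ spanned by $\ell_1$ and $P$; their intersection $\ell = \Pi_1 \cap \Pi_2$ is the unique transversal through $P$ meeting both skew lines, and it meets them in $K$-points $Q_1 \in \ell_1$, $Q_2 \in \ell_2$. If $\ell \not\subset S$ then $\ell \cdot S = P + Q_1 + Q_2$ and $P$ is generated in one step; if $\ell \subset S$ then $P$ lies on a $K$-line of $S$ through $Q_1 \in \ell_1$, and one finishes with Lemma~\ref{lema:3.2,4+} applied at $Q_1$ (or, when $Q_1$ and $Q_2$ are both Eckardt, a short Gauss-map argument on $\ell_2$ using the non-Eckardt point guaranteed by the $\#K=4$ hypothesis). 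No covering argument over a family of planes is needed, and no general plane sections ever enter.

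The step in your plan that would actually fail is the claim that if $\Pi \cdot S$ is an irreducible plane cubic through $A \in \ell_1$ and $B \in \ell_2$, then its $K$-points lie in $\Span(\{A,B\} \cup \ell_1(K))$ ``by chord-and-tangent on that cubic.'' Within the plane $\Pi$, the only available starting points on that cubic are $A$ and $B$, and the chord--tangent process on a smooth plane cubic generates only (a translate of) the subgroup of the Jacobian generated by the starting points --- this is exactly the Mordell--Weil obstruction, and it is false in general that two points generate all $K$-points of a genus-one curve. So this step is not merely unproved but wrong as stated, and the subsequent ``covering'' and ``bad configuration'' discussion inherits the gap; moreover, in the small fields at issue ($\#K=4$) there is no room for a general-position argument of the kind you sketch. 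Your observation that one should first get $\ell_2(K) \subseteq \Span(\ell_1(K))$ via Lemmas~\ref{lema:3.3,7+} and~\ref{lema:3.3,F4F5F8} is correct but belongs to the deduction of Theorem~\ref{thm:4+} from this lemma, not to the proof of the lemma itself, which starts from $\ell_1(K) \cup \ell_2(K)$.
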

\begin{proof}
Let $P$ be a $K$-point on $S$ not belonging to either line;
we will show that $P$ belongs to the span of $\ell_1(K) \cup \ell_2(K)$.
Let $\Pi_1$ be the unique plane containing $\ell_2$ and $P$,
and $\Pi_2$ the unique plane containing $\ell_1$ and $P$.
Since $\ell_1$ and $\ell_2$ are skew we know that $\ell_i \not \subset \Pi_i$.
Write $Q_i=\ell_i \cap \Pi_i$.
Note that $P$, $Q_1$ and $Q_2$ are distinct points on $S$ that also belong
to the $K$-line $\ell=\Pi_1 \cap \Pi_2$. Suppose first that $\ell \not \subset S$. Then $\ell\cdot S=P+Q_1+Q_2$. 
Thus $P \in \Span\left(\ell_1(K)\cup \ell_2(K) \right)$ as required.

Next suppose that $\ell \subset S$. 
If $\lvert K \rvert \geq 5$, we know from Lemma~\ref{lem:paraline} that there are non-Eckardt 
$K$-points on both $\ell_1$ and $\ell_2$. If $\lvert K \rvert=4$, then one of the hypotheses
of the lemma is that there is a non-Eckardt $K$-point on one of those two lines. Without loss of generality,
$R \in \ell_2(K)$ is non-Eckardt.

Now $\ell \subset \Gamma_{Q_1}$.
If $Q_1$ is not Eckardt, then by
Lemma~\ref{lema:3.2,4+},
\[
P \in \ell(K)  \subseteq \Gamma_{Q_1}(K) \subseteq \Span(Q_1) \subseteq \Span(\ell_1(K)).
\]
Thus we may assume that $Q_1$ is Eckardt. Similarly $Q_2$ is Eckardt.
Then $\Gamma_{Q_1}=\ell \cup \ell_1 \cup \ell_3$ where $\ell_3$ is also $K$-rational. 
Now $\ell_2$ must meet that tangent plane $\Pi_{Q_1}$ in a unique point, and that is $Q_2 \in \ell$.
Therefore, $\ell_2$ and $\ell_3$ are skew. Consider $\gamma_{\ell_2}$. As $Q_2$ is Eckardt,
it is a ramification point for $\gamma_{\ell_2}$. Therefore $\gamma_{\ell_2}(Q_2) \ne \gamma_{\ell_2}(R)$.
Note $\gamma_{\ell_2}(Q_2) \cdot \ell_3=Q_1$, so $\gamma_{\ell_2}(R) \cdot \ell_3=R^\prime$
where $R^\prime$ is a $K$-point distinct from $Q_1$. Moreover, $R^\prime \in \Span(R) \subseteq \Span(\ell_2(K))$.
Finally, consider the line that joints $R^\prime$ and $P$. This lies in $\Pi_{Q_1}$, but not on $S$, and so must intersect
$\ell_1$ in a $K$-point $R^{\prime\prime}$. Hence $P \in \Span(\ell_1(K) \cup \ell_2(K))$
which completes the proof.
\end{proof}

\begin{proof}[Proof of Theorem~\ref{thm:4+}]
If $K$ has $13$ or more elements then we can invoke Siksek's Theorem~\ref{thm:siksek}. Thus we may
restrict our attention to the cases $\#K=4$, $5$, $7$, $8$, $9$, $11$.
The proof follows from Lemmas~\ref{lema:3.2,4+}, \ref{lema:3.3,7+}, \ref{lema:3.3,F4F5F8} and \ref{lema:3.4,4+}.
\end{proof}

\section{Proof of Theorem~\ref{thm:F3}}

\begin{mylem}
\label{lema:F3compute}
Let $K=\F_3$ and $S$ be a smooth cubic surface defined over $K$.
Suppose $S$ contains a skew pair of $K$-lines $\ell$, $\ell^\prime$
and suppose $\ell$ contains exactly one $K$-rational Eckardt point.
Then there exists a non-Eckardt point $P\in\ell(K)$ such that
\[
 S(K)=\Span(P).
\]
\end{mylem}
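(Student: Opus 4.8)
The plan is to reduce as far as possible to the structural arguments already available from Section~\ref{thm:4+}, and to dispose of the residual small-field obstructions by an explicit finite check (this is the computer enumeration alluded to in the introduction). Over $K = \F_3$ a line has only four $K$-rational points, so the counting arguments behind Lemmas~\ref{lema:3.3,7+} and~\ref{lema:3.3,F4F5F8} no longer automatically produce a non-Eckardt point with a well-behaved tangent plane; this is precisely why $\# K = 3$ is excluded there. The hypothesis that $\ell$ contains exactly one $K$-rational Eckardt point leaves three non-Eckardt points on $\ell(K)$, and the strategy is to show that at least one of them, call it $P$, generates everything.

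First I would record what is already free. By Lemma~\ref{lema:3.2,4+} — whose proof only needs $\#K \geq 4$, but whose conclusion $\ell(K) \subseteq \Gamma_P(K) \subseteq \Span(P)$ in fact goes through verbatim for any non-Eckardt $P$ once one checks the handful of steps that invoke "at least five points on $\ell$"; over $\F_3$ these must be re-examined, and this re-examination is the first real task — we would get $\ell(K) \subseteq \Span(P)$. The next goal is to cross over to $\ell^\prime$: if for some non-Eckardt $P \in \ell(K)$ the point $Q = \Pi_P \cdot \ell^\prime$ is non-Eckardt, then Proposition~\ref{prop:skewlines} gives $\ell^\prime(K) \subseteq \Span(P)$, and then Lemma~\ref{lema:3.4,4+} (again, after verifying its proof survives $\#K = 3$, which is the delicate point since it too counts points on a line) would yield $\Span(\ell(K) \cup \ell^\prime(K)) = S(K)$ and hence $S(K) = \Span(P)$. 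Alternatively, if some non-Eckardt $P \in \ell(K)$ has $\Gamma_P = \ell \cup C$ with $C$ an irreducible conic, then Proposition~\ref{prop:conic} tells us $Q = \Pi_P \cdot \ell^\prime$ is automatically non-Eckardt, and we are done as before. So the only way the structural argument can fail is if \emph{every} non-Eckardt $P \in \ell(K)$ lies on a second line of $S$ (so $\Gamma_P$ is three lines) and, for every such $P$, the point $\Pi_P \cdot \ell^\prime$ is a $K$-rational Eckardt point of $\ell^\prime$.

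The main obstacle is exactly this residual configuration, and here I would switch to exhaustive computer enumeration. One enumerates, up to projective equivalence over $\F_3$, the smooth cubic surfaces $S$ together with a skew pair $\ell, \ell^\prime$ of $\F_3$-lines such that $\ell$ carries exactly one $\F_3$-rational Eckardt point and every non-Eckardt point of $\ell(\F_3)$ lies on a further line of $S$ with $\Pi_P \cdot \ell^\prime$ Eckardt; since $\F_3$ is tiny and the lines on $S$ and their incidences are rigid combinatorial data (the $27$ lines and their intersection graph, cut out by the Weyl group $W(E_6)$), this is a small finite list. For each configuration on the list, and for each of the three non-Eckardt $P \in \ell(\F_3)$, one runs the secant-and-tangent process starting from $\{P\}$ until it stabilises and checks whether the result is all of $S(\F_3)$; the lemma asserts that for at least one $P$ it is. The expectation — and what the computation must confirm — is that even in this "bad" case the tangent process is rich enough, because once $\ell(K)$ and the two extra lines $\ell_1, \ell_2$ through $P$ and through another non-Eckardt point of $\ell$ are in $\Span(P)$ (these $\ell_1, \ell_2$ being skew, exactly as in the proof of Lemma~\ref{lema:3.3,F4F5F8}), one has enough skew pairs inside $\Span(P)$ to bootstrap via Lemma~\ref{lema:3.4,4+}-type moves to all of $S(K)$; the computer check simply certifies that no exceptional surface over $\F_3$ evades this.
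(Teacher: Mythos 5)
The paper's own proof of this lemma is a single, self-contained exhaustive computation: it normalises $\ell$ to $X=Y=0$, places the unique $K$-rational Eckardt point at $(0:0:0:1)$ with the two other lines through it being $X=Z=0$ and $X=Y+Z=0$, writes every such surface as $X(aX^2+bXY+cXZ+dY^2+eY+fZ^2+gW^2)+YZ(Y+Z)$, and loops over all $3^7=2187$ coefficient choices, checking smoothness and then directly verifying $\Span(P)=S(K)$ for some $P\in\ell(K)$. Your proposal instead tries to recycle the structural machinery of Section 3 and only compute on a ``residual'' class, and this is where the gap lies. Lemma~\ref{lema:3.2,4+}, Proposition~\ref{prop:skewlines} and Lemma~\ref{lema:3.4,4+} are all proved under the hypothesis $\#K\geq 4$, and their proofs genuinely use that $\ell(K)$ has at least five points (e.g.\ to find a point $R\in\ell(K)$ avoiding $P$, $P^\prime$ and up to two Eckardt points). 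You acknowledge that these steps ``must be re-examined'' over $\F_3$, but you never carry out the re-examination, and there is good reason to doubt it can be done in the present setting: the paper's own $\F_3$ substitutes (Lemmas~\ref{lema:3.1,F3}--\ref{lema:3.4,3}) all require that the lines carry \emph{no} $K$-rational Eckardt points, whereas the lemma at hand is precisely the case where $\ell$ carries one. With four points on $\ell(K)$, one of them Eckardt, the pigeonhole arguments that drive those lemmas break down, which is exactly why the author relegates this case entirely to machine verification.

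A second, consequent problem is that your proposed computation does not certify the lemma as stated. You enumerate only the configurations surviving your structural reduction; since that reduction is unproved, the surfaces it purports to dispose of are not actually covered by anything. Moreover, enumerating ``up to projective equivalence'' via the incidence combinatorics of the $27$ lines and $W(E_6)$ is a substantially harder implementation task than the paper's direct normal form, and you give no indication of how the residual class would be concretely parametrised. To repair the argument along the paper's lines, you should drop the conditional reduction and simply enumerate all smooth surfaces in the normal form above, checking $\Span(P)=S(K)$ for each non-Eckardt $P\in\ell(K)$; alternatively, to keep your hybrid strategy you would need to actually prove $\F_3$-versions of Lemmas~\ref{lema:3.2,4+} and~\ref{lema:3.4,4+} valid in the presence of one Eckardt point on $\ell$, which the paper itself does not attempt.
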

\begin{proof}
The lemma was proved by an exhaustive computer enumeration implemented in {\tt MAGMA} \cite{MAGMA}.
By a projective change of co-ordinates we may first suppose that the line $\ell$ is defined by $X=Y=0$,
and that therefore the surface $S$ has the form $X Q_1+Y Q_2$ where $Q_1\in\F_3[X,Y,Z,W]$ and
$Q_2 \in \F_3[Y,Z,W]$ are homogeneous quadratic forms. We may then by further projective changes of co-ordinates
suppose that the $K$-rational Eckardt point on $\ell$ is the point $P=(0:0:0:1)$.
We denote the other two lines in $S$ passing through $P$ by $\ell_1$, $\ell_2$. 
The line $\ell^\prime$ must intersect the plane $\Pi_P$ in some $K$-point $Q \ne P$.
As $\ell$ and $\ell^\prime$ are skew, $Q \notin \ell$ and so without loss of generality
$Q \in \ell_1$. The line $\ell_1$ joins two $K$-points and is therefore a $K$-line. Hence
$\ell_2$ is also a $K$-line.
By yet another change of coordinates that preserves $\ell$ and $P$, 
we may suppose that $\ell_1$ and $\ell_2$ have the equations $\ell_1:X=Z=0$ and $\ell_2:X=Y+Z=0$. 
%The assumption that the
%other $\ell_1$, $\ell_2$ are $K$-rational is valid due to the presence of the $K$-rational line $\ell^\prime$, which
%forces the points $\ell^\prime\cdot\ell_i$ to be $K$-rational, meaning that there are two $K$-rational points
%on each of the $\ell_i$. 
Thus every cubic surface defined over
$\F_3$ containing a skew pair of $K$-lines has a model that can be written in the form
\[
 X(aX^2+bXY+cXZ+dY^2+eY+fZ^2+gW^2)+YZ(Y+Z)
\]
where $a,\dots , g \in \F_3$. Therefore the program was enumerated over $3^7=2187$ cubic surfaces.
Our program checked the surfaces for smoothness, then whether there was a point $P\in \ell$ such that
$\Span(P)=S(K)$. In the cases where this failed, we verified that there was a second Eckardt point in $\ell(K)$.
%The lemma was proved by an exhaustive computer enumeration implemented in {\tt MAGMA} \cite{MAGMA}.
%By a projective change of co-ordinates we may first suppose that the line $\ell$ is defined by $X=Y=0$, 
%and that therefore the surface $S$ has the form $X Q_1+Y Q_2$ where $Q_1\in\F_3[X,Y,Z,W]$ and 
%$Q_2 \in \F_3[Y,Z,W]$ are homogeneous quadratic forms. We know that $\ell_1$ and $\ell_2$
%are either both $K$-lines or are Galois conjugates. However by applying Lemma~\ref{lema:triangle} we see that
%exactly one of $\ell_1$, $\ell_2$ intersects $\ell^\prime$, so they cannot be conjugates and must in fact both
%be $K$-lines.
%
%We may then by further projective changes of co-ordinates
%suppose that the $K$-rational Eckardt point on $\ell$ is the point $(0:0:0:1)$ and that the remaining lines
%passing through this point are the lines defined by $\ell_1:X=Z=0$ and $\ell_2:X=Y+Z=0$. Thus every cubic surfaces defined over
%$\F_3$ containing a skew pair of $K$-lines has a model that can be written in the form
%\[
% X(aX^2+bXY+cXZ+dY^2+eY+fZ^2+gW^2)+YZ(Y+Z)
%\]
%where $a,\dots , g \in \F_3$. Therefore the program was enumerated over $3^7=2187$ cubic surfaces.
%Our program checked the surfaces for smoothness, then whether there was a point $P\in \ell$ such that
%$\Span(P)=S(K)$. In the cases where this failed, we verified that there was a second Eckardt point in $\ell(K)$.
\end{proof}

The remaining cases in which there are no $K$-rational Eckardt points on either $\ell$, $\ell^\prime$ 
result from the following lemmas.

\begin{mylem}
\label{lema:3.1,F3}
Let $K=\F_3$ 
and $S$ be a smooth cubic surface defined over $K$. 
Let $\ell$ be a $K$-line on $S$ that does not contain any $K$-rational Eckardt points. 
Let $P \in \ell(K)$ be a point that does not lie on any other line belonging to $S$. 
Then
\[
\ell(K) \subseteq \Gamma_P(K) \subseteq \Span(P).
\] 
\end{mylem}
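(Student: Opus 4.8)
The plan is to mimic the structure of the proof of Lemma~\ref{lema:3.1,4+}, paying attention to the fact that over $\F_3$ a $K$-line has only four $K$-rational points, so counting arguments that worked when $\#K \ge 4$ become tight. As in that lemma, since $P$ lies on no line of $S$ other than $\ell$, the tangent section decomposes as $\Gamma_P = \ell \cup C$ with $C$ an irreducible (hence, by the argument in Proposition~\ref{prop:conic}, absolutely irreducible) conic defined over $K$, and $C \cdot \ell = P + P'$ with $P' \in S(K)$. I would split into the cases $P = P'$ and $P \ne P'$ exactly as before.

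When $P = P'$ the line $\ell$ is tangent to $C$ at $P$, and the argument of Lemma~\ref{lema:3.1,4+} goes through verbatim: for any $Q \in C(K)$ with $Q \ne P$ the line $PQ$ meets $S$ in $2P + Q$, so $C(K) \subseteq \Span(P)$; then fixing such a $Q$ and running secants from $Q$ through the points of $\ell(K)$ recovers $\ell(K) \subseteq \Span(P)$. This step uses only that $C(K)$ contains a point other than $P$, which holds because an absolutely irreducible conic over $\F_3$ has four $K$-points. So the only real content is the case $P \ne P'$, where I must show $P' \in \Span(P)$; the argument of Lemma~\ref{lema:3.1,4+} first gives $\Gamma_P(K) \setminus \{P'\} \subseteq \Span(P)$, and as there, $\Gamma_P = \Gamma_{P'}$ with $P \ne P'$ forces $\gamma_\ell$ to be separable, so by Lemma~\ref{lem:paraline} there are at most two parabolic points on $\ell$. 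Here is where $\#K = 3$ bites: $\ell(K)$ has only four points, and we have already spent two of them on $P$ and $P'$, leaving only two further $K$-points $R_1, R_2$ on $\ell$, and we are not guaranteed that either is non-Eckardt — the hypothesis is merely that $\ell$ carries no $K$-rational Eckardt point, which actually helps, since then \emph{both} $R_1$ and $R_2$ are non-Eckardt (indeed all of $\ell(K)$ is non-Eckardt). For such an $R = R_i$, since $\Pi_R \ne \Pi_P$ (as $\gamma_\ell$ has degree $2$ and $\Pi_P = \Pi_{P'}$ already accounts for the fibre over that plane), $\Gamma_R$ is either $\ell$ together with an irreducible conic $C'$ with $\ell \cdot C' = R + R'$, or a union of three lines through $R$; in the first case, since $R$ lies on no line of $S$ but $\ell$, one applies the $P=P'$/$P\ne P'$ dichotomy to $\Gamma_R$, obtaining $\Gamma_R(K) \setminus \{R'\} \subseteq \Span(R) \subseteq \Span(P)$, and since $\Pi_{P'} = \Pi_P \ne \Pi_R = \Pi_{R'}$ we have $P' \ne R'$, hence $P' \in \Gamma_R(K) \setminus \{R'\} \cap$ (the plane $\Pi_P$) — more carefully, one joins $P'$ to a suitable point of $\Gamma_R(K) \setminus \{R'\}$ lying on a conic through $R'$ and deduces $P' \in \Span(P)$, just as at the end of the proof of Lemma~\ref{lema:3.1,4+}.

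The step I expect to be the main obstacle is the sub-case where $\Gamma_R$ is a union of three lines: there is no $K$-rational Eckardt point on $\ell$, so $R$ is automatically not Eckardt, and the three lines $\ell, \ell_2, \ell_3$ meet pairwise in the three distinct $K$-points $R = \ell\cdot\ell_2$, $R' = \ell\cdot\ell_3$, $R'' = \ell_2\cdot\ell_3$. Over $\F_3$ each of $\ell_2(K)$ and $\ell_3(K)$ has just four points, so after removing $R''$ (and $R$ or $R'$) we have only two free points on each, and the argument at the end of Lemma~\ref{lema:3.1,4+} — take $Q \in \ell_3(K) \setminus \{R', R''\}$, form the line $PQ$... wait, form $2R + Q$ via the tangent at $R$, then join $P'$ to such a $Q$ to land on $\ell_2$ away from $R, R''$ — needs $\ell_3(K) \setminus \{R', R''\}$ to be nonempty and, for the final step, needs the line through $P'$ and $Q$ to meet $\ell_2$ in a point distinct from $R''$. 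I would verify that two free points on $\ell_3$ suffice: both $R, R'$ lie on $\ell$ and are $\ne P'$, so both are already in $\Span(P)$; then for $Q \in \ell_3(K)$ with $Q \notin \{R', R''\}$ the tangent line to $S$ at $R$ (which lies in the plane $\Pi_R = \ell\cup\ell_2\cup\ell_3$... more precisely the line $RQ \subset \Pi_R$, not on $S$, meets $S$ in $2R+Q$ only if $Q$ is chosen so $R$ is a point of tangency — here use that $R$ lies on $\ell_2$ and $\ell_3$ inside $\Pi_R$) gives $Q \in \Span(P)$, so $\ell_3(K)\setminus\{R''\} \subseteq \Span(P)$ and symmetrically $\ell_2(K)\setminus\{R''\} \subseteq \Span(P)$; finally the line joining $P' \in \ell$ to a point $Q \in \ell_3(K)\setminus\{R',R''\}$ lies in $\Pi_R$, is not contained in $S$, and so meets the third line $\ell_2$ in a $K$-point $Q'$ with $Q' \ne R''$ (else the line would pass through $R''$ and hence equal $\ell_3$), giving $P' + Q + Q'$ with $Q, Q' \in \Span(P)$, whence $P' \in \Span(P)$. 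The delicate point throughout is that with only two free points available on each auxiliary line over $\F_3$ the avoidance conditions (staying away from the pairwise intersection points) must be checked to be simultaneously satisfiable, and I would confirm $\#(\ell_3(K)\setminus\{R',R''\}) = 2 \ge 1$ is all that is actually used. Assembling the three sub-cases completes the chain $\ell(K) \subseteq \Gamma_P(K) \subseteq \Span(P)$.
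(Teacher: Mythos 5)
Your proof is correct and follows essentially the same route as the paper's: handle the parabolic case directly, and for $P \neq P'$ pass to a second point $R \in \ell(K)$ (automatically non-Eckardt by the hypothesis on $\ell$, which is exactly where $\#K=3$ forces that hypothesis), splitting according to whether $\Gamma_R$ is $\ell$ plus an irreducible conic or a triangle of $K$-lines. The only slip is cosmetic: in the triangle case $R$ lies on $\ell$ and $\ell_2$ (not on ``$\ell_2$ and $\ell_3$''), which is what makes $R$ a double point of $\Gamma_R$ and yields $RQ\cdot S = 2R+Q$.
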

\begin{proof}
If $P$ is a parabolic point then, as in the proof of Lemma~\ref{lema:3.1,4+},
we know that
$\Gamma_P(K)\subseteq\Span(P)$.
Otherwise there is a point $P^\prime\in\ell(K)$ such that $P^\prime \ne P$ 
but $\Gamma_{P^\prime}=\Gamma_P$.
In this case, similarly to the proof of Lemma~\ref{lema:3.1,4+}, we have
\[
\Gamma_P(K) \setminus \{P^\prime\} \subseteq \Span(P).
\]
As $K=\F_3$ there are four points in $\ell(K)$, $P$, $P^\prime$, $R$ and $R^\prime$.
If $\Gamma_R=\ell \cup C$ where $C$ is an irreducible conic then
$P^\prime\in\Span(R)\subseteq\Span(P)$. Hence $\Gamma_P(K)\subseteq\Span(P)$.
Otherwise $\Gamma_R=\Gamma_{R^\prime}$ and is the union of $3$ $K$-lines in $S$,
which are 
$\ell$, $\ell_2$ and $\ell_3$, where $\ell\cdot\ell_2=R$, $\ell\cdot\ell_3=R^\prime$ 
and $\ell_2\cdot\ell_3=R^{\prime\prime}$. We know that 
$(\ell_2(K)\cup\ell_3(K))\setminus\{R^{\prime\prime}\}\subseteq\Span(R,R^\prime)$
and $P^\prime\in\Span((\ell_2(K)\cup\ell_3(K)\setminus\{R^{\prime\prime}\})$.
Thus
\[
\ell(K)\subseteq\Gamma_P(K)\subseteq\Span(P)
\]
which completes the proof.
\end{proof}

\begin{mylem}
\label{lema:3.2,3}
Let $K=\mathbb{F}_3$ and $S$ be a smooth cubic surface defined over $K$. 
Suppose $S$ contains a skew pair of $K$-lines, $\ell$, $\ell^\prime$ that contains no $K$-rational Eckardt points.
Then there is a point $P \in \ell(K)$ such that 
\[
\ell(K)\subseteq\Gamma_P\subseteq\Span(P).
\]
\end{mylem}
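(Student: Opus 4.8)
The goal is to find a single non-Eckardt point $P\in\ell(K)$ with $\ell(K)\subseteq\Gamma_P(K)\subseteq\operatorname{Span}(P)$, working over $\F_3$ where $\#\ell(K)=4$. The strategy is to split on whether every point of $\ell(K)$ lies on a second $K$-line of $S$, and to exploit Lemma~\ref{lema:3.1,F3} whenever it does not.

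\textbf{Step 1: the easy branch.} If there is some $P\in\ell(K)$ lying on no other line of $S$, then $P$ is automatically non-Eckardt, and Lemma~\ref{lema:3.1,F3} applies directly: $\ell(K)\subseteq\Gamma_P(K)\subseteq\operatorname{Span}(P)$, and we are done. So we may assume every one of the four points of $\ell(K)$ lies on at least one further $K$-line of $S$.

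\textbf{Step 2: ruling out the inseparable case.} Still under the standing assumption that $\ell$ has no $K$-rational Eckardt point, I would argue that $\gamma_\ell$ is separable. Indeed, if $\ch(K)=2$ (which is not the case here since $K=\F_3$, but the argument is worth recording) or more relevantly, if $\gamma_\ell$ were inseparable, part (iii) of Lemma~\ref{lem:paraline} would force $5$ Eckardt points on $\ell$; but over $\F_3$ these need not all be $K$-rational, so the cleaner route is: by Lemma~\ref{lem:paraline}(i), since $\ch(\F_3)\neq 2$, $\gamma_\ell$ is separable and there are at most two parabolic points on $\ell$, hence at most two points $P\in\ell(K)$ with $\Gamma_P$ \emph{not} of the form $\ell\cup(\text{conic})$ via a cusp. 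The key structural fact I want is: among the four points of $\ell(K)$, $\gamma_\ell$ (degree $2$, separable) pairs up the non-parabolic ones, so either some $P\in\ell(K)$ is parabolic, or the four points split into two $\gamma_\ell$-fibres $\{P,P'\}$ and $\{R,R'\}$.

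\textbf{Step 3: combining with "every point lies on a second line".} If some $P\in\ell(K)$ is parabolic then, since $P$ is not Eckardt (no $K$-Eckardt points on $\ell$), $\Gamma_P=\ell\cup C$ with $C$ an irreducible conic tangent to $\ell$ at $P$ — contradicting the assumption that $P$ lies on a second \emph{line}. Hence no point of $\ell(K)$ is parabolic, and by Step 2 the four points form two $\gamma_\ell$-fibres $\{P,P'\}$, $\{R,R'\}$. For the pair $\{P,P'\}$: $P$ lies on a second line, so (since $P$ is non-parabolic, non-Eckardt) $\Gamma_P=\ell\cup\ell_2\cup\ell_3$ is a triangle of $K$-lines, with $\ell\cdot\ell_3=P'$. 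Now I would run exactly the endgame argument from the last case of Lemma~\ref{lema:3.1,F3}: $P,P'\in\operatorname{Span}(P)$ trivially; using a third line and a point $Q\in\ell_3(K)$ distinct from the two triangle vertices on $\ell_3$ (which exists since $\#\ell_3(K)=4$), the tangent line at $P$ through $Q$ gives $Q\in\operatorname{Span}(P)$, so $(\ell_2(K)\cup\ell_3(K))\setminus\{R''\}\subseteq\operatorname{Span}(P)$ where $R''=\ell_2\cdot\ell_3$; then a secant through two already-generated points on $\ell_2$ and $\ell_3$ hits the third vertex $R''$. Thus $\Gamma_P(K)\subseteq\operatorname{Span}(P)$, and in particular $\ell(K)\subseteq\operatorname{Span}(P)$.

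\textbf{Main obstacle.} The delicate point is handling the sub-case where $P$ lies on a second line but $\Gamma_P$ is \emph{not} a full triangle — i.e.\ reconciling "$P$ non-parabolic and on a second line" with the possible shapes of $\Gamma_P$. For a non-parabolic non-Eckardt point, $\Gamma_P$ has a node at $P$, so the two branches at $P$ are distinct; if one branch is a line $\ell_2\subset S$ then, as the node forces the other branch to be a \emph{different} component, $\Gamma_P$ is either $\ell\cup\ell_2\cup\ell_3$ or $\ell\cup(\text{conic through }P)$ — but the latter would make the conic meet $\ell$ at $P$ \emph{transversally}, and then the residual intersection point $P'=C\cdot\ell\setminus\{P\}$ must be checked not to obstruct the argument; this is precisely where the small field $\#K=3$ could in principle bite, since one has very little room to pick auxiliary points. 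I expect this to be resolvable by the same fibre-pairing bookkeeping as in Lemma~\ref{lema:3.1,F3} (using the other fibre $\{R,R'\}$ to generate $P'$), and the proof should close by citing that the companion computational lemma (Lemma~\ref{lema:F3compute}) already covers the configurations with a $K$-Eckardt point, so the hypothesis "no $K$-rational Eckardt point on $\ell$ or $\ell'$" is exactly what lets Steps 1--3 go through without exception.
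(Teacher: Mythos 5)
There is a genuine gap in Step 3, at the assertion that ``$P,P'\in\Span(P)$ trivially''. With $\Gamma_P=\ell\cup\ell_2\cup\ell_3$, $\ell\cdot\ell_2=P$, $\ell\cdot\ell_3=P'$ and $R''=\ell_2\cdot\ell_3$, the only moves available from $P$ alone are the tangent lines at $P$, each of which meets $S$ in the divisor $2P+X$ with $X\in\ell_3$; the choices $X=P'$ and $X=R''$ are excluded because the corresponding lines are $\ell$ and $\ell_2$, which lie on $S$. So what you actually obtain is only $\ell_3(K)\setminus\{P',R''\}$ (two points over $\F_3$) — not $P'$, not $R''$, and no new point of $\ell_2$ or $\ell$. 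The ``endgame'' of Lemma~\ref{lema:3.1,F3} that you invoke does not transfer: there the triangle is $\Gamma_R$ for an \emph{auxiliary} point, and both of its vertices on $\ell$ are already known to lie in $\Span(P)$ from the earlier conic argument; here the triangle is $\Gamma_P$ itself and you start with only $\{P\}$. Consequently your claim $(\ell_2(K)\cup\ell_3(K))\setminus\{R''\}\subseteq\Span(P)$ is unsupported, and the vertices $P'$ and $R''$ — which are the whole difficulty of the lemma — are never reached. Your closing suggestion of ``using the other fibre $\{R,R'\}$ to generate $P'$'' fails for the same reason: $R,R'\in\ell(K)$ are not yet in $\Span(P)$ either.

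The missing idea is the one place where the second skew line $\ell'$ enters the argument (your proposal never uses it except in the closing remarks). After relabelling within the fibre $\{P,P'\}$ so that $\ell'$ meets the residual line $\ell_3$ through $P'$, set $Q=\ell'\cdot\ell_3$. Then $Q$ is neither $P'$ (since $\ell'$ is skew to $\ell$) nor $R''$ (else $R''$ would be an Eckardt point on $\ell'$), so $Q$ is one of the two non-vertex points of $\ell_3(K)$ and hence $Q\in\Span(P)$; and $Q$ is non-Eckardt precisely because it lies on $\ell'$. One then analyses $\Gamma_Q$: either $Q$ is parabolic, so $\Gamma_Q=\ell_3\cup(\text{irreducible conic})$ and $\ell_3(K)\subseteq\Span(Q)$ by the argument of Lemma~\ref{lema:3.1,F3}, or $\Gamma_Q=\Gamma_{Q'}$ for the other non-vertex point $Q'\in\ell_3(K)\cap\Span(P)$ (the fibre of $\gamma_{\ell_3}$ through $Q$ cannot contain a vertex, since the tangent planes at $P'$ and $R''$ both equal $\Pi_P$ and $\gamma_{\ell_3}$ has degree $2$), whence $\ell_3(K)\subseteq\Span(Q,Q')$. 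Either way $P',R''\in\Span(P)$, and tangent lines at these two vertices then sweep out the rest of $\ell(K)$ and $\ell_2(K)$. Without this use of $\ell'$ the proof cannot be closed.
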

\begin{proof}
 If there is a point in $\ell(K)$ that lies on no other line in $S$ 
then the result follows from Lemma~\ref{lema:3.1,F3}.
So suppose every point in $\ell(K)$ lies on exactly one other $K$-line in $S$.
Since $K=\F_3$ there are $4$ points in $\ell(K)$, which we denote $P$, $P^\prime$, $R$, $R^\prime$.
We have $\Gamma_P=\Gamma_{P^\prime}=\ell\cup\ell_1\cup\ell_2$ with $P=\ell\cdot\ell_1$, $P^\prime=\ell\cdot\ell_2$
and $\Gamma_R=\Gamma_{R^\prime}$ with $R=\ell\cdot\ell_3$, $R^\prime=\ell\cdot\ell_4$. Let 
$P^{\prime\prime}=\ell_1\cdot\ell_2$. By the argument in the proof of Lemma~\ref{lema:F3compute} 
we know that $\ell^\prime$ intersects precisely 
one of $\ell_1$, $\ell_2$ and one of $\ell_3$, $\ell_4$. Without loss of generality suppose that $\ell^\prime$ intersects 
$\ell_2$ and $\ell_4$. By our hypotheses $\ell^\prime$ contains no $K$-rational Eckardt points, therefore the point $Q=\ell^\prime\cdot\ell_2$ is non-Eckardt. Note that $Q\in(\ell_2(K)\setminus\{P^\prime, P^{\prime\prime}\}) \subseteq \Span(P)$. Let $Q^\prime$ be the remaining point in $\ell_2(K)\setminus\{P^\prime,P^{\prime\prime}\}$.
Our aim is the show that $P^\prime$, $P^{\prime\prime} \in \Span(P)$ since we can generate all the remaining points in 
$\ell(K)$, $\ell_1(K)$ from $P^{\prime\prime}$, $P^\prime$ respectively. If $Q$ is a parabolic point then 
\[
P^\prime, P^{\prime\prime}\in \ell_2(K) \subseteq \Span(Q) \subseteq \Span(P).
\]
Likewise if $\Gamma_Q=\Gamma_{Q^\prime}$ then
\[
 P^\prime, P^{\prime\prime} \in \ell_2(K) \subseteq \Span(Q,Q^\prime) \subseteq \Span(P),
\]
which completes the proof.
\end{proof}

\begin{mylem}
\label{lema:3.3,3}
Let $K=\F_3$ and $S$ be a smooth cubic surface defined over $K$. 
Suppose $S$ contains a skew pair of $K$-lines $\ell$, $\ell^\prime$ that contains no 
$K$-rational Eckardt points. 
Then
\[
 \ell^\prime(K) \subseteq \Span(\ell(K)).
\]
\end{mylem}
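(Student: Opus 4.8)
The plan is to reduce the statement to the existence of a single known non-Eckardt point on $\ell'$, and then to propagate $K$-rational points through plane sections. First apply Lemma~\ref{lema:3.2,3} to $\ell$ to obtain a point $P\in\ell(K)$ with $\ell(K)\subseteq\Gamma_P(K)\subseteq\Span(P)$; since $P\in\ell(K)$ this gives $\Gamma_P(K)\subseteq\Span(\ell(K))$. Put $Q=\ell'\cap\Pi_P$. As $\ell'$ is skew to $\ell\subseteq\Pi_P$ it is not contained in $\Pi_P$, so $Q$ is a single $K$-rational point and it lies on $\Gamma_P$; hence $Q\in\Span(\ell(K))$. Moreover $Q$ is not an Eckardt point, since by hypothesis $\ell'$ carries none that is $K$-rational. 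So it is enough to show, starting from this one known non-Eckardt point $Q\in\ell'(K)$, that all of $\ell'(K)$ lies in $\Span(\ell(K))$.

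If $Q$ lies on no line of $S$ other than $\ell'$ then, being non-Eckardt, $\Gamma_Q=\ell'\cup C$ with $C$ an irreducible conic, and Lemma~\ref{lema:3.1,F3} applied to $\ell'$ and $Q$ gives $\ell'(K)\subseteq\Gamma_Q(K)\subseteq\Span(Q)\subseteq\Span(\ell(K))$, which finishes the proof. Otherwise $Q$ lies on a second $K$-line, so $\Gamma_Q=\ell'\cup m_1\cup m_2$ is a triangle of three $K$-lines, with $Q$ the node $\ell'\cap m_1$ (say) and two further $K$-rational vertices. From here I would propagate $K$-points through plane sections exactly as in the proofs of Lemmas~\ref{lema:3.1,F3} and~\ref{lema:3.2,3}: whenever the node $X=\ell_a\cap\ell_b$ of a triangle $\ell_a\cup\ell_b\cup\ell_c$ of $K$-lines already lies in $\Span(\ell(K))$, the secants through $X$ inside $\Pi_X$ reach the two non-vertex $K$-points of the opposite side $\ell_c$, and joining those to the $K$-points of some line of the plane all of whose $K$-points are already available — at first $\ell$ itself — one then recovers the remaining $K$-points of the triangle, including the two vertices on $\ell_a$ and $\ell_b$. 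Running this from $Q$ puts the partner vertex $Q'=\ell'\cap m_2$, together with all the $K$-points of $m_1$ and $m_2$, into $\Span(\ell(K))$.

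What is left is to reach the other two $K$-points of $\ell'$, those of the $\gamma_{\ell'}$-fibre distinct from that of $Q$, which the triangle $\Gamma_Q$ does not meet. For each such point $R$ one starts a fresh chain at the point $V=\ell\cap\Pi_R\in\ell(K)$ where $\ell$ meets $R$'s tangent plane: if $R$ lies on a second line then $V$ lies on one of the two lines of $\Gamma_R$ through $R$, and chasing the triangle $\Gamma_V$ (again containing $\ell$) puts that line, hence $R$ itself, into $\Span(\ell(K))$, using that $R$, being non-Eckardt, is a non-vertex of $\Gamma_V$; if instead $\Gamma_R=\ell'\cup C_R$ with $C_R$ tangent to $\ell'$ at $R$, then $V$ lies on $C_R$ and is non-Eckardt by Proposition~\ref{prop:conic}, and combining it with the points $Q,Q'$ already at our disposal fills in the $K$-points of $C_R$ and, via a secant lying in the tangent plane at the second parabolic point, finally yields $R$, after which Lemma~\ref{lema:3.1,F3} applied to $\ell'$ and $R$ completes the proof. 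The hard part is not any single step but the organisation: one must carry a running list of the $K$-lines already known to be ``good'' (all their $K$-points in $\Span(\ell(K))$), check at every triangle that the $K$-point being aimed at is a non-vertex — which is precisely where the hypothesis that neither $\ell$ nor $\ell'$ contains a $K$-rational Eckardt point is used — and verify that the last $K$-point of $\ell'$ really is produced and not merely the partner pair containing $Q$; that final point may need an out-of-plane secant joining two points generated in earlier steps. Over $\F_3$ each line has only four $K$-points, so the list of configurations is short, but assembling this chain of reductions is the substance of the argument.
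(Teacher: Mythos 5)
Your opening reduction is exactly the paper's: apply Lemma~\ref{lema:3.2,3} to $\ell$ to obtain $P\in\ell(K)$ with $\Gamma_P(K)\subseteq\Span(P)$, and observe that $Q=\Pi_P\cdot\ell'$ is a $K$-rational non-Eckardt point of $\ell'$ lying in $\Span(\ell(K))$. The paper then finishes in one line by applying Lemma~\ref{lema:3.2,3} a second time, now to $\ell'$ (the hypotheses are symmetric in $\ell$ and $\ell'$), to get $\ell'(K)\subseteq\Span(Q)\subseteq\Span(\ell(K))$. You instead set out to re-derive this containment by hand for the particular point $Q$, and that part of your text is a description of a strategy rather than a proof: you concede as much (``assembling this chain of reductions is the substance of the argument''), and at least one of the individual steps is wrong as stated.

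Concretely: in the triangle case $\Gamma_Q=\ell'\cup m_1\cup m_2$ you propose to recover the vertices by joining the non-vertex points of the opposite side to the $K$-points of ``some line of the plane all of whose $K$-points are already available --- at first $\ell$ itself''. But $\ell$ is not a line of that plane: $\ell'\subset\Pi_Q$ and $\ell$ is skew to $\ell'$, so $\ell$ meets $\Pi_Q$ in a single point. Thus the mechanism you describe for capturing $Q'=\ell'\cap m_2$ and $m_1\cap m_2$ is not available, and this is precisely the delicate step (reaching the ``partner'' vertex) that occupies most of the proofs of Lemmas~\ref{lema:3.1,F3} and~\ref{lema:3.2,3}. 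The final paragraph, on the two points of $\ell'(K)$ in the other fibre of $\gamma_{\ell'}$, likewise enumerates cases without verifying them. Your implicit worry --- that Lemma~\ref{lema:3.2,3} asserts the span property only for \emph{some} point of the line, which need not be your $Q$ --- is a fair reading of its statement, but the remedy is to track which point that lemma actually produces and connect it to $\Span(\ell(K))$ (or to state the lemma for the point one needs), not to restart the whole propagation informally; as written, your argument does not close.
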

\begin{proof}
Let $P\in \ell(K)$. By Lemma~\ref{lema:3.2,3} $Q=\Pi_P\cdot\ell^\prime \in \Gamma_P \subseteq \Span(P)$.
We invoke Lemma~\ref{lema:3.2,3} again to obtain
\[
 \ell^\prime(K) \subseteq \Span(Q) \subseteq \Span(P) \subseteq \Span(\ell(K)).
\]
\end{proof}

\begin{mylem}
\label{lema:3.4,3}
Let $K=\F_3$, and let $S$ be a smooth cubic surface
defined over $K$. Suppose $S$ contains a skew pair of $K$-lines $\ell_1$, $\ell_2$, which contains no
$K$-rational Eckardt points.
Then
\[
\Span(\ell_1(K)\cup \ell_2(K)) = S(K).
\]
\end{mylem}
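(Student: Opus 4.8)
The plan is to follow the skeleton of the proof of Lemma~\ref{lema:3.4,4+}, replacing the step that over larger fields was handled by Lemma~\ref{lema:3.2,4+} (which gives $\ell(K)\subseteq\Span(Q_1)$ for \emph{every} non-Eckardt $Q_1\in\ell(K)$, a statement with no $\F_3$-analogue) by an explicit planar generation argument. Since $\ell_1,\ell_2$ is a skew pair of $K$-lines with no $K$-rational Eckardt points, Lemma~\ref{lema:3.3,3} already gives $\Span(\ell_1(K)\cup\ell_2(K))=\Span(\ell_1(K))$, so it suffices to show that an arbitrary $P\in S(K)$ not lying on $\ell_1\cup\ell_2$ belongs to the span. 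As in Lemma~\ref{lema:3.4,4+} I would take $\Pi_1$ to be the plane spanned by $\ell_2$ and $P$, $\Pi_2$ the plane spanned by $\ell_1$ and $P$, put $Q_i=\ell_i\cap\Pi_i\in\ell_i(K)$, and let $\ell=\Pi_1\cap\Pi_2$; then $\ell$ is a $K$-line through the three distinct $K$-points $P,Q_1,Q_2$. If $\ell\not\subset S$ then $\ell\cdot S=P+Q_1+Q_2$ and $P$ is in the span at once, so the whole difficulty is the case $\ell\subset S$.

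Assume $\ell\subset S$. Because $\ell_1,\ell_2$ carry no $K$-rational Eckardt points, $Q_1$ and $Q_2$ are non-Eckardt, so $\Gamma_{Q_1}=\ell_1\cup\ell\cup\ell_3$ and $\Gamma_{Q_2}=\ell_2\cup\ell\cup\ell_4$ are triangles of $K$-lines, $\ell_3,\ell_4$ being the $K$-rational residual lines and passing through neither $Q_1$ nor $Q_2$ respectively. Using Theorem~\ref{thm:27} I would check that $\{\ell_1,\ell_3\}$ and $\{\ell_2,\ell_4\}$ are the two coplanar pairs through $\ell$ coming from $Q_1$ and $Q_2$; since distinct such pairs are disjoint, the five lines $\ell,\ell_1,\ell_2,\ell_3,\ell_4$ are distinct, each of $\ell_1,\ell_3$ is skew to each of $\ell_2,\ell_4$, and $\ell_2$ meets $\ell_4$ (and $\ell_1$ meets $\ell_3$) in a $K$-point off $\ell$. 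Writing $B_i=\ell\cap\ell_i$ for $i=3,4$, a short incidence argument using these skewness relations shows that $Q_1,Q_2,B_3,B_4$ are four distinct $K$-points of $\ell$ and hence exhaust $\ell(K)$ (as $\#\ell(K)=4$); since $P\ne Q_1,Q_2$, we may assume $P=B_3$.

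It then remains to generate $P$ inside the plane $\Pi_{Q_2}$, whose section of $S$ is the triangle $\ell_2\cup\ell\cup\ell_4$ and which already contains $\ell_2(K)$ together with the non-vertex point $Q_1$ of this triangle. The first step: for each of the two non-vertex $K$-points $V$ of $\ell_2$, the line $Q_1V$ is not contained in $S$ (the only line of $S$ through $Q_1$ meeting $\ell_2$ is $\ell$), so $Q_1V\cdot S=Q_1+V+W$ with $W=Q_1V\cap\ell_4$ a non-vertex point of $\ell_4$; as $V$ varies this recovers both non-vertex $K$-points of $\ell_4$, giving $\ell_4(K)\setminus\{B_4\}\subseteq\Span(\ell_1(K)\cup\ell_2(K))$. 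The second step: fix a non-vertex $V\in\ell_2(K)$ and let $W$ run over the two non-vertex $K$-points of $\ell_4$ just obtained; the lines $VW$ are not contained in $S$, and their residual points on $\ell$ are precisely the two non-vertex points $Q_1$ and $P$ of $\ell$, one each, so one of these secants realises $P$ as a third intersection point of two points of the span, whence $P\in\Span(\ell_1(K)\cup\ell_2(K))$. The case $P=B_4$ is handled symmetrically.

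I expect the main obstacle to be exactly what makes this harder than Lemma~\ref{lema:3.4,4+}: over $\F_3$ each line has only four $K$-points and there is no ``for all non-Eckardt points'' generation statement, so every planar step has to be tracked point by point --- the bookkeeping that matches each residual point with a specific non-vertex point of $\ell_4$ or of $\ell$, and the repeated verification that none of the secants used happens to be one of the $27$ lines. The other delicate point is establishing the Cayley--Salmon sub-configuration of $\ell,\ell_1,\ell_2,\ell_3,\ell_4$, since it is precisely the skewness relations among them that make the counting in the last two steps come out right.
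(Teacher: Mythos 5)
Your reduction to the case $\ell\subset S$ matches the paper exactly (same planes $\Pi_1,\Pi_2$, same points $Q_i$, same dispatch of the case $\ell\not\subset S$). For the case $\ell\subset S$ the paper's own proof is a one-liner: since $Q_1$ is not Eckardt it invokes Lemma~\ref{lema:3.2,3} to conclude $P\in\ell(K)\subseteq\Gamma_{Q_1}(K)\subseteq\Span(Q_1)\subseteq\Span(\ell_1(K))$. You replace this by an explicit incidence argument: pin down the configuration $\ell,\ell_1,\ell_2,\ell_3,\ell_4$ via Theorem~\ref{thm:27}, identify $\ell(K)=\{Q_1,Q_2,B_3,B_4\}$, and generate $P$ by two rounds of secants inside the plane $\Pi_{Q_2}$, using all of $\ell_2(K)$ together with $Q_1$. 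Both routes reach the conclusion, but yours is arguably on firmer footing: Lemma~\ref{lema:3.2,3} as stated is only existential (``there is a point $P\in\ell(K)$ such that\dots''), whereas the paper applies it to the \emph{given} point $Q_1$; your planar bookkeeping sidesteps any reliance on that stronger reading, at the cost of length. The details you flag as delicate do all check out: the disjointness $(\ell_1\cup\ell_3)\cap(\ell_2\cup\ell_4)=\emptyset$ of the two coplanar pairs through $\ell$ gives every skewness relation you need, and in particular $\ell_3\cap\ell_4=\emptyset$ yields $B_3\ne B_4$ immediately, so the four points exhaust $\ell(\F_3)$; the secants $Q_1V$ and $VW$ are never components of $\Gamma_{Q_2}$ since each contains a point off each of $\ell,\ell_2,\ell_4$; and distinct choices of $V$ (resp.\ $W$) through a fixed point give distinct residuals, which is exactly what forces one residual to be $P$.
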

\begin{proof}
This proof is similar to the proof of Lemma~\ref{lema:3.4,4+}. 
Let $P$ be a $K$-point on $S$ not belonging to either line;
we will show that $P$ belongs to the span of $\ell_1(K) \cup \ell_2(K)$.
Let $\Pi_1$ be the unique plane containing $\ell_2$ and $P$,
and $\Pi_2$ the unique plane containing $\ell_1$ and $P$.
Since $\ell_1$ and $\ell_2$ are skew we know that $\ell_i \not \subset \Pi_i$.
Write $Q_i=\ell_i \cap \Pi_i$.
Note that $P$, $Q_1$ and $Q_2$ are distinct points on $S$ that also belong
to the $K$-line $\ell=\Pi_1 \cap \Pi_2$. Suppose first that $\ell \not \subset S$. Then $\ell\cdot S=P+Q_1+Q_2$. 
Thus $P \in \Span\left(\ell_1(K)\cup \ell_2(K) \right)$ as required.

Next suppose that $\ell \subset S$. Now $\ell \subset \Gamma_{Q_1}$.
Since $Q_1$ is not Eckardt, then by
Lemma~\ref{lema:3.2,3},
\[
P \in \ell(K)  \subseteq \Gamma_{Q_1}(K) \subseteq \Span(Q_1) \subseteq \Span(\ell_1(K)),
\]
which completes the proof.
\end{proof}

\begin{proof}[Proof of Theorem~\ref{thm:F3}]
The proof follows from Lemmas~\ref{lema:3.2,3}, \ref{lema:3.3,3}, \ref{lema:3.4,3} and \ref{lema:F3compute}.
\end{proof}

\section{Proof of Theorem~\ref{thm:F2}}

Theorems~\ref{thm:F2} was proved by an exhaustive  computer enumeration implemented in {\tt MAGMA} 
\cite{MAGMA}. 

\begin{proof}[Proof of Theorem~\ref{thm:F2}] By a projective change of coordinates
we may suppose that the line $\ell$ is defined by $X=Y=0$, and  that therefore the surface
$S$ has the form $X Q_1+Y Q_2$ where $Q_1 \in \F_2[X,Y,Z,W]$ and $Q_2 \in \F_2[Y,Z,W]$ are
homogeneous quadratic forms. Our program enumerated all possible $Q_1$, $Q_2$, checked the surface
for smoothness and whenever $\ell$ contained no $K$-rational Eckardt points, it verified that the span of one
of its $K$-points is equal to $S(K)$. This meant the program was enumerated over $2^{16}=65536$ possible models,
as there are $10$ monomials in $X$, $Y$, $Z$, $W$, and $6$ monomials in $Y$, $Z$, $W$.
\end{proof}

\end{document}